\newtheorem{theorem}{Theorem}[section]
\newtheorem{lemma}[theorem]{Lemma}
\newtheorem{proposition}[theorem]{Proposition}
\newtheorem{corollary}[theorem]{Corollary}
\theoremstyle{definition}
\theoremstyle{remark}
\newtheorem{remark}[theorem]{Remark}
\numberwithin{equation}{section}
\begin{document}

\title [Some generalizations of numerical radius on off-diagonal part]{Some generalizations of numerical radius on off-diagonal part of $2\times 2$ operator matrices }

\author[M. Hajmohamadi,  R. Lashkaripour, M. Bakherad,   ]{ Monire Hajmohamadi$^1$, Rahmatollah Lashkaripour$^2$ and Mojtaba Bakherad$^3$}

\address{$^1$$^{,2}$$^{,3}$ Department of Mathematics, Faculty of Mathematics, University of Sistan and Baluchestan, Zahedan, I.R.Iran.}

\email{$^{1}$monire.hajmohamadi@yahoo.com}
\email{$^2$lashkari@hamoon.usb.ac.ir}

\email{$^{3}$mojtaba.bakherad@yahoo.com; bakherad@member.ams.org}

\subjclass[2010]{Primary 47A12,  Secondary  47A30, 47A63, 47B33}

\keywords{Cartesian decomposition; Jensen inequality; Numerical radius; Off-diagonal part; Operator mean; Operator matrix; Positive operator; Young inequality.}
\begin{abstract}We generalize several inequalities involving powers of the numerical radius for off-diagonal part of $2\times2$ operator matrices of the form $T=\left[\begin{array}{cc}
 0&B\\
 C&0
 \end{array}\right]$, where $B, C$ are two operators.
 In particular,  if  $T=\left[\begin{array}{cc}
 0&B\\
 C&0
 \end{array}\right]$, then we get
\begin{align*}
 {1\over 2^{{3\over2}(r-1)}}\max\{ \| \mu \|, \| \eta \| \}
 \leq w^{r}(T)\leq \frac{1}{2^{r+1}}
 \max\{ \| \mu \|, \| \eta \| \},
 \end{align*}
 where $r\geq 2$ and
 $ \mu=|(C-B^{*})+i(C+B^{*})|^{r}+|(B^{*}-C)+i(C+B^{*})|^{r}$,

 $ \eta=|(B-C^{*})+i(B+C^{*})|^{r}+|(C^{*}-B)+i(B+C^{*})|^{r}$.

\end{abstract} \maketitle
\section{Introduction}
Let $({\mathscr H}, \langle\, .\,,\, .\,\rangle)$ be a complex Hilbert space and ${\mathbb B}(\mathscr H)$ denotes the $C^{*}$-algebra of all bounded linear operators on ${\mathscr H}$. In the case when ${\rm dim}{\mathscr
H}=n$, we identify ${\mathbb B}({\mathscr H})$ with the matrix
algebra $\mathbb{M}_n$ of all $n\times n$ matrices with entries in
the complex field.  The numerical radius of $T\in {\mathbb B}({\mathscr H})$ is defined by
\begin{align*}
w(T):=\sup\{\mid \langle Tx, x\rangle\mid : x\in {\mathscr H}, \parallel x \parallel=1\}.
\end{align*}
It is well known that $w(\,\cdot\,)$ defines a norm on ${\mathbb B}({\mathscr H})$, which is equivalent to the usual operator norm $\| \,.\, \|$. In fact, for any $T\in {\mathbb B}({\mathscr H})$,
$
\frac{1}{2}\| T \|\leq w(T) \leq\| T \|$;
 see \cite{gof}.
 An important inequality for $w(A)$ is the power inequality stating that $w(A^n)\leq w(A)^n\,\,(n=1,2,\cdots)$.
 It has been shown in \cite{naj}, that if $T\in {\mathbb B}({\mathscr H})$, then
\begin{align}\label{13}
w(T)\leq \frac{1}{2} \| |T| + |T^{*}| \|,
\end{align}
where $|T|=(T^{*}T)^{\frac{1}{2}}$ is the absolute value of $T$.
Recently in \cite{eli} the authors showed
 \begin{align}\label{7}
 w^{2r}(T)\leq  \frac{1}{2}\left(\|A\|^{2r}+\left\|\frac{1}{p} f^{pr}(\mid A^2 \mid)+\frac{1}{q} g^{qr}(\mid (A^{*})^2\mid) \right\|\right),
   \end{align}
  in which   $f$, $g$ are nonnegative  continuous  functions on $[0, \infty)$  satisfying the relation $f(t)g(t)=t\,(t\in [0, \infty))$,
  $r\geq 1$, $p \geq q >1$ such that $\frac{1}{p}+\frac{1}{q}=1$ and $pr \geq 2$.\\

 Let ${\mathscr H_{1}},{\mathscr H_{2}}, \cdots,{\mathscr H_{n}} $ be Hilbert spaces, and consider the direct sum ${\mathscr H}=\bigoplus_{j=1}^{n}{\mathscr H_{j}}$. With respect to this decomposition, every operator $T\in {\mathbb B}({\mathscr H})$ has an $n\times n$ operator matrix representation $T=[T_{ij}]$ with entries  $T_{ij}\in {\mathbb B}({\mathscr H_{j}}, {\mathscr H_{i}})$, the space of all bounded linear operators from ${\mathscr H_{j}}$ to ${\mathscr H_{i}}$.
Operator matrices provide a usual tool for studying Hilbert space operators, which have been extensively studied in the literatures.
 The classical Young inequality says that if $p, q>1$ such that $\frac{1}{p}+\frac{1}{q}=1$, then $ab\leq \frac{a^{p}}{p}+\frac{b^{q}}{q}$ for positive real numbers $a, b$.
A refinement of the scalar Young inequality is presented in \cite{FUJ} as following $(a^{\frac{1}{p}}b^{\frac{1}{q}})^{m}+r_{0}^{m}(a^{\frac{m}{2}}-b^{\frac{m}{2}})^{2}\leq(\frac{a}{p}+\frac{b}{q})^{m},$
where $r_{0}=\min \{ \frac{1}{p}, \frac{1}{q}\}$ and $m=1, 2,\cdots$. In particular, if $p=q=2$, then
\begin{align}\label{12}
(a^{\frac{1}{2}}b^{\frac{1}{2}})^{m}+(\frac{1}{2})^{m}(a^{\frac{m}{2}}-b^{\frac{m}{2}})^{2}\leq 2^{-m}(a+b)^{m}.
\end{align}
Let $T_{1}, T_{2},\cdots,T_{n}\in {\mathbb B}({\mathscr H})$.
The functional $w_{p}$ of operators $T_{1},\cdots,T_{n}$ for $p\geq 1$ is defined in \cite{FUJ2} as following
\begin{align*}
w_{p}(T_{1},\cdots,T_{n}):= \sup_{\| x \| = 1} (\sum_{i=1}^{n} | \langle T_{i}x, x\rangle |^{p})^{\frac{1}{p}}.
\end{align*}
In \cite{sheikh} the authors showed the following inequality
\begin{align*}
 w_{p}^p(A_1^*T_{1}B_1,\cdots,A_n^*T_{n}B_n)\leq {1\over2}\left\|\sum_{i=1}^n\left([B_i^*f^2(|T_i|)B_i]^p+[A_i^*g^2(
 |T_i^*|)A_i]^p\right)\right\|-\inf_{\|X\|=1} \zeta (X),
 \end{align*}
 where $A_i, B_i, T_i \in {\mathbb B}({\mathscr H})\,\,(i=1,2,\cdots,n)$, $f$, $g$ are nonnegative  continuous  functions on $[0, \infty)$ such that $f(t)g(t)=t\,(t\in [0, \infty))$, $p, r\geq m$, $m=1,2,\cdots,$   and
 {\footnotesize\begin{align*}
 \zeta (X)=2^{-m}\sum_{i=1}^n\left(\langle[B_i^*f^2(|T_i|)B_i]^{p\over m}x,x\rangle^{m \over2}-\langle[A_i^*g^2(
 |T_i^*|)A_i]^{p\over m}x,x\rangle^{m\over2}\right)^2.
 \end{align*}}
For further information about numerical radius inequalities we refer the
reader to \cite{aA, ando, sheikh} and references therein.

In this paper, we establish some generalizations of inequalities that is based on the off-diagonal parts of $2\times2$ operator matrices. We also show some inequalities involving powers of the numerical radius for the off-diagonal parts of $2\times 2$ operator matrices.


\section{main results}
To prove our first result, we need several well known lemmas.\\

\begin{lemma}\cite{ROD, YAM}\label{1}
Let $A\in {\mathbb B}({\mathscr H_1})$, $B\in {\mathbb B}({\mathscr H_2, \mathscr H_1})$, $C\in {\mathbb B}({\mathscr H_1,\mathscr H_2})$ and $D\in {\mathbb B}({\mathscr H_2})$. Then the following statements hold:

 $(a)\,\,w\left(\left[\begin{array}{cc}
 A&0\\
 0&D
 \end{array}\right]\right)$
 = $ \max \{{w(A), w(D)}\};$
\\

 $(b)\,\,w\left(\left[\begin{array}{cc}
 0&B\\
 C&0
 \end{array}\right]\right)$
 = $w\left(\left[\begin{array}{cc}
 0&C\\
 B&0
 \end{array}\right]\right);$
\\

 $(c)\,\,w\left(\left[\begin{array}{cc}
 0&B\\
 C&0
 \end{array}\right]\right)$
 = $\frac{1}{2} \sup_{\theta\in \mathcal{R}}\parallel e^{i\theta}B+e^{-i\theta}C^{*}\parallel; $
\\

 $(d)\,\,w\left(\left[\begin{array}{cc}
 A&B\\
 B&A
 \end{array}\right]\right)$
 = $\max \{w(A+B), w(A-B)\}.$
\\

In particular,

\begin{align*}
      w\left(\left[\begin{array}{cc}
              0&B\\
              B&0
              \end{array}\right]\right)
              = w(B).
\end{align*}
\end{lemma}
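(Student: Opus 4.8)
The plan is to read off the claimed identity as a one-step specialization of part $(d)$. Putting $A = 0$ in
\begin{align*}
w\left(\left[\begin{array}{cc} A&B\\ B&A \end{array}\right]\right) = \max\{w(A+B),\, w(A-B)\}
\end{align*}
makes the left-hand matrix coincide with $\left[\begin{array}{cc} 0&B\\ B&0 \end{array}\right]$, while the right-hand side becomes $\max\{w(B),\, w(-B)\}$. Thus everything reduces to evaluating this maximum.

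The remaining point is the symmetry $w(-B) = w(B)$, which I would verify directly from the definition: for each unit vector $x$ one has $|\langle (-B)x, x\rangle| = |\langle Bx, x\rangle|$, and taking the supremum over $\|x\| = 1$ gives the equality. Consequently $\max\{w(B),\, w(-B)\} = w(B)$, which is exactly the asserted formula. Since this is essentially the whole argument, there is no real obstacle; the only thing one must not overlook is this (trivial) invariance of $w$ under multiplication by $-1$.

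As a cross-check, or as an independent derivation, one may instead specialize part $(c)$ with $C = B$ to get
\begin{align*}
w\left(\left[\begin{array}{cc} 0&B\\ B&0 \end{array}\right]\right) = \frac{1}{2}\sup_{\theta\in\mathcal{R}}\|e^{i\theta}B + e^{-i\theta}B^*\|,
\end{align*}
and then recognize $\frac{1}{2}(e^{i\theta}B + e^{-i\theta}B^*) = \mathrm{Re}(e^{i\theta}B)$ together with the standard characterization $w(B) = \sup_{\theta}\|\mathrm{Re}(e^{i\theta}B)\|$ of the numerical radius. This route trades the symmetry $w(-B)=w(B)$ for the rotational invariance of $w$, and reaches the same conclusion.
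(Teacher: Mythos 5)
Your proposal is correct and matches the paper's (implicit) treatment: the paper cites parts $(a)$--$(d)$ from the literature without proof, and the displayed special case is exactly the specialization $A=0$ in part $(d)$ together with the trivial identity $w(-B)=w(B)$, which is what you do. Your alternative route via part $(c)$ and $w(B)=\sup_{\theta}\|\mathrm{Re}(e^{i\theta}B)\|$ is also valid, but the first argument is the intended one.
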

The second lemma is a simple consequence of the classical Jensen and Young inequalities; see \cite{ABM}.

\begin{lemma}\label{2}
Let $a, b\geq 0$ and $ p, q>1$ such that $\frac{1}{p} +\frac{1}{q}=1$. Then
\begin{align*}
ab\leq \frac {a^{p}}{p}+\frac {b^{q}}{q} \leq (\frac{a^{pr}}{p}+\frac{b^{qr}}{q})^{\frac{1}{r}}
\end{align*}
for $r \geq 1$.
\end{lemma}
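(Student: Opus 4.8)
The plan is to split the double inequality at its middle term and treat the two halves independently. The left-hand inequality $ab \le a^p/p + b^q/q$ is exactly the classical scalar Young inequality recalled in the introduction, valid for all $a,b \ge 0$ and conjugate exponents $p,q$, so no separate argument is needed there. The substance of the lemma therefore lies entirely in the right-hand inequality, which is where the exponent $r \ge 1$ actually enters.

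For that inequality I would first normalize the notation by setting $x := a^p \ge 0$, $y := b^q \ge 0$, and $\lambda := 1/p$, so that $1 - \lambda = 1/q$ since $1/p + 1/q = 1$. The claim then reads $\lambda x + (1-\lambda) y \le \bigl(\lambda x^r + (1-\lambda) y^r\bigr)^{1/r}$. Because both sides are nonnegative and $t \mapsto t^r$ is increasing on $[0,\infty)$, raising to the $r$-th power gives the equivalent statement $\bigl(\lambda x + (1-\lambda) y\bigr)^r \le \lambda x^r + (1-\lambda) y^r$.

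This reduced form is precisely Jensen's inequality for the map $\phi(t) = t^r$ on $[0,\infty)$, which is convex exactly because $r \ge 1$, applied to the two-point convex combination carrying weights $\lambda$ and $1-\lambda$. The inequality then follows at once, and taking the $r$-th root restores the desired bound and completes the chain back to $ab$.

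The step I expect to demand the most care is not any single estimate but the bookkeeping of the substitution together with the twofold use of the hypothesis $r \ge 1$: it guarantees the convexity of $\phi(t) = t^r$ (the inequality reverses for $r < 1$) and it ensures that passing to the $r$-th power is order-preserving. Everything else is routine, and in fact the whole argument is essentially the observation that the power mean with weights $1/p, 1/q$ is monotone in its order.
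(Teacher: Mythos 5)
Your proof is correct and follows exactly the route the paper indicates: it proves the left inequality by the classical Young inequality and the right inequality by Jensen's inequality (convexity of $t\mapsto t^{r}$ for $r\geq 1$), which is precisely what the paper means when it calls the lemma ``a simple consequence of the classical Jensen and Young inequalities.'' Your write-up merely supplies the substitution $x=a^{p}$, $y=b^{q}$, $\lambda=1/p$ that the paper leaves implicit.
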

The next lemma follows from the spectral theorem for positive operators and Jensen inequality; see \cite{KIT}.
\begin{lemma}\label{3}
(McCarty inequality). Let $T\in{\mathbb B}({\mathscr H})$, $ T \geq 0$ and $x\in {\mathscr H}$ be a unit vector. Then\\
$(a)\,\, \langle Tx, x\rangle^{r} \leq  \langle T^{r}x, x\rangle$ for $ r\geq 1;$\\
$(b)\,\,\langle T ^{r}x, x\rangle  \leq  \langle Tx, x\rangle^{r}$ for $ 0<r\leq 1$.\\
\end{lemma}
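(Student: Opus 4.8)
The plan is to reduce both inequalities to the scalar Jensen inequality by means of the spectral theorem, exactly as the surrounding remark suggests. Since $T\geq 0$, the spectral theorem furnishes a spectral measure $E$ supported on $\sigma(T)\subseteq[0,\|T\|]$ with $T=\int_{\sigma(T)}\lambda\,dE(\lambda)$, and the continuous functional calculus then gives $T^{r}=\int_{\sigma(T)}\lambda^{r}\,dE(\lambda)$. For the fixed unit vector $x$ I would introduce the Borel measure $\mu_{x}(S)=\langle E(S)x,x\rangle$ on $[0,\|T\|]$. Because $E(\sigma(T))=I$ and $\|x\|=1$, its total mass is $\mu_{x}(\sigma(T))=\langle x,x\rangle=1$, so $\mu_{x}$ is a probability measure; this normalization is the single place where the hypothesis $\|x\|=1$ is needed.

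With this measure in hand, the functional calculus converts the operator quantities into scalar integrals, namely $\langle Tx,x\rangle=\int\lambda\,d\mu_{x}(\lambda)$ and $\langle T^{r}x,x\rangle=\int\lambda^{r}\,d\mu_{x}(\lambda)$. Both assertions therefore become a comparison between $\big(\int\lambda\,d\mu_{x}\big)^{r}$ and $\int\lambda^{r}\,d\mu_{x}$, that is, between $\phi$ evaluated at the mean and the mean of $\phi$ for the function $\phi(t)=t^{r}$ on $[0,\infty)$.

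The last step is Jensen's inequality for the probability measure $\mu_{x}$. When $r\geq 1$ the map $t\mapsto t^{r}$ is convex on $[0,\infty)$, so Jensen yields $\big(\int\lambda\,d\mu_{x}\big)^{r}\leq\int\lambda^{r}\,d\mu_{x}$, which is precisely part $(a)$. When $0<r\leq 1$ the same map is concave, so the inequality reverses to give $\int\lambda^{r}\,d\mu_{x}\leq\big(\int\lambda\,d\mu_{x}\big)^{r}$, which is part $(b)$.

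Since this is a classical result quoted from the literature, I do not anticipate any genuine obstacle. The only points demanding care are verifying that $\mu_{x}$ is truly a probability measure, which is exactly where $\|x\|=1$ is used, and correctly distinguishing the convexity of $t\mapsto t^{r}$ for $r\geq 1$ from its concavity for $0<r\leq 1$, so that the direction of Jensen's inequality comes out right in each of the two cases.
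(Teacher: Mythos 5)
Your proof is correct and follows exactly the route the paper itself indicates for this lemma, namely the spectral theorem for positive operators combined with Jensen's inequality (the paper only cites this and gives no details, deferring to Kittaneh's work). Your fleshed-out version — turning $\langle E(\cdot)x,x\rangle$ into a probability measure via $\|x\|=1$ and applying Jensen to the convex/concave function $t\mapsto t^{r}$ — is the standard and complete way to carry out that sketch.
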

The following lemma is a consequence of convexity of the absolute value function.
\begin{lemma}\label{4}
Let $T\in {\mathbb B}({\mathscr H})$ be self-adjoint and $x\in {\mathscr H}$ be a unit  vector. Then
\begin{align*}
\mid \langle Tx, x\rangle \mid \leq \langle \mid T\mid x, x\rangle.
 \end{align*}
\end{lemma}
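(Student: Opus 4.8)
The plan is to reduce the operator statement to a scalar inequality via the spectral theorem and then apply Jensen's inequality to the convex function $\varphi(t)=|t|$. First I would invoke the spectral theorem for the self-adjoint operator $T$ to write $T=\int_{\sigma(T)}\lambda\,dE_\lambda$, where $\{E_\lambda\}$ is the spectral resolution of $T$ and $\sigma(T)\subseteq\mathbb{R}$ is its (necessarily real) spectrum. Applying the Borel functional calculus to the map $t\mapsto|t|$ then gives $|T|=\int_{\sigma(T)}|\lambda|\,dE_\lambda$, so that both quantities appearing in the statement are integrals of $x$-expectations against the same spectral measure.

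Next, for the fixed unit vector $x$, I would introduce the scalar measure $d\mu(\lambda)=\langle dE_\lambda x,x\rangle$ on $\sigma(T)$. Since $E$ is a projection-valued measure and $\|x\|=1$, this $\mu$ is a genuine probability measure: its total mass is $\langle E(\sigma(T))x,x\rangle=\langle x,x\rangle=1$. With this notation the two sides of the desired inequality become $\langle Tx,x\rangle=\int\lambda\,d\mu(\lambda)$, which is real because $T$ is self-adjoint, and $\langle |T|x,x\rangle=\int|\lambda|\,d\mu(\lambda)$. The conclusion then follows at once from Jensen's inequality applied to the convex function $\varphi(t)=|t|$ against the probability measure $\mu$:
\[
|\langle Tx,x\rangle|=\left|\int\lambda\,d\mu(\lambda)\right|=\varphi\!\left(\int\lambda\,d\mu(\lambda)\right)\le\int\varphi(\lambda)\,d\mu(\lambda)=\int|\lambda|\,d\mu(\lambda)=\langle |T|x,x\rangle.
\]

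There is no genuine obstacle here; the only point deserving care is checking that $\mu$ is a probability measure, which is exactly what makes Jensen applicable. An equivalent and arguably cleaner route bypasses measures entirely: the pointwise bound $-|\lambda|\le\lambda\le|\lambda|$ lifts through the functional calculus to the operator inequality $-|T|\le T\le|T|$, and pairing each term with the unit vector $x$ gives $-\langle |T|x,x\rangle\le\langle Tx,x\rangle\le\langle |T|x,x\rangle$, which is precisely the asserted estimate $|\langle Tx,x\rangle|\le\langle |T|x,x\rangle$. I expect the self-contained spectral argument above to be the one to record, since it makes the role of convexity that the lemma attributes to $|\cdot|$ explicit.
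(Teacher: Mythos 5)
Your proof is correct and is essentially the argument the paper intends: the paper states this lemma without a written proof, attributing it exactly to the convexity of the absolute value function, and your spectral-measure/Jensen argument (with the probability measure $d\mu(\lambda)=\langle dE_\lambda x,x\rangle$, using $\|x\|=1$) is precisely the rigorous form of that attribution. Your alternative one-line route via $-|T|\le T\le |T|$ and the reality of $\langle Tx,x\rangle$ is equally valid and even shorter.
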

\begin{lemma}\cite[Theorem 1]{KIT}\label{5}
Let $T\in{\mathbb B}({\mathscr H})$ and $x, y\in {\mathscr H}$ be any vectors.\\
 $(a)$ If $f$, $g$ are nonnegative  continuous functions on $[0, \infty)$ which are satisfying the relation $f(t)g(t)=t\,(t\in[0, \infty))$, then
\begin{align*}
\mid \langle Tx, y \rangle \mid \leq \parallel f(\mid T \mid)x \parallel \parallel g(\mid T^{*} \mid)x \parallel;
 \end{align*}
 $(b)$ If $0\leq \alpha \leq 1$, then
 \begin{align*}
 \mid \langle Tx, y\rangle\mid^{2}\leq \langle \mid T\mid^{2\alpha}x, x\rangle\langle\mid T^{*}\mid^{2(1-\alpha)}y, y\rangle.
 \end{align*}
\end{lemma}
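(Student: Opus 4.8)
The plan is to derive both parts from the polar decomposition of $T$ together with the Cauchy--Schwarz inequality, obtaining $(b)$ as the special case $f(t)=t^{\alpha}$, $g(t)=t^{1-\alpha}$ of $(a)$. I shall prove the natural form of $(a)$ in which the second factor is $\|g(|T^{*}|)y\|$, since $y$ is the vector paired with $Tx$.

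First I would fix the polar decomposition $T=U|T|$, where $U$ is a partial isometry with initial space $\overline{\operatorname{ran}|T|}$, so that $U^{*}U$ is the orthogonal projection onto $\overline{\operatorname{ran}|T|}$, $P:=I-UU^{*}$ is the projection onto $\ker T^{*}$, and $|T^{*}|=U|T|U^{*}$. The technical heart of the argument is the transfer rule, which I would first prove for $g$ with $g(0)=0$ by approximating $g$ uniformly on the spectrum of $|T|$ by polynomials without constant term and using $U^{*}U|T|=|T|$, giving $U|T|^{k}U^{*}=(U|T|U^{*})^{k}=|T^{*}|^{k}$; splitting a general $g$ as $g=(g-g(0))+g(0)$ then yields
\[
U\,g(|T|)\,U^{*}=g(|T^{*}|)-g(0)\,P .
\]
The correction term carries the favourable sign, which is exactly what makes the general case go through.

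For part $(a)$, the hypothesis $f(t)g(t)=t$ gives $|T|=f(|T|)g(|T|)$ with $f(|T|)$ and $g(|T|)$ commuting self-adjoint operators, so that
\[
\langle Tx,y\rangle=\langle|T|x,U^{*}y\rangle=\langle f(|T|)g(|T|)x,U^{*}y\rangle=\langle f(|T|)x,\,g(|T|)U^{*}y\rangle,
\]
and Cauchy--Schwarz gives $|\langle Tx,y\rangle|\le\|f(|T|)x\|\,\|g(|T|)U^{*}y\|$. Applying the transfer rule to $g^{2}$ yields
\[
\|g(|T|)U^{*}y\|^{2}=\langle U\,g(|T|)^{2}U^{*}y,y\rangle=\|g(|T^{*}|)y\|^{2}-g(0)^{2}\|Py\|^{2}\le\|g(|T^{*}|)y\|^{2},
\]
and combining the last two displays gives the desired bound.

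Part $(b)$ is then immediate: for $0\le\alpha\le1$ the functions $f(t)=t^{\alpha}$ and $g(t)=t^{1-\alpha}$ are nonnegative and continuous on $[0,\infty)$ and satisfy $f(t)g(t)=t$, so $(a)$ gives $|\langle Tx,y\rangle|\le\||T|^{\alpha}x\|\,\||T^{*}|^{1-\alpha}y\|$; squaring and writing $\||T|^{\alpha}x\|^{2}=\langle|T|^{2\alpha}x,x\rangle$ and $\||T^{*}|^{1-\alpha}y\|^{2}=\langle|T^{*}|^{2(1-\alpha)}y,y\rangle$ produces the stated inequality. The main obstacle is the transfer rule $U\,g(|T|)\,U^{*}=g(|T^{*}|)-g(0)P$ for a partial, generally non-unitary, isometry $U$: one must both justify it through the functional calculus and notice that the correction term has the right sign, so that $\|g(|T|)U^{*}y\|\le\|g(|T^{*}|)y\|$ survives for every admissible $g$. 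Everything else reduces to Cauchy--Schwarz and routine manipulation of the functional calculus.
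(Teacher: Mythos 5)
Your proof is correct, but there is nothing in the paper to compare it with: the paper states this lemma purely as a quotation of \cite[Theorem 1]{KIT} and gives no proof of it. What you have reconstructed is, in essence, Kittaneh's original argument --- polar decomposition $T=U|T|$, Cauchy--Schwarz, and a functional-calculus transfer between $|T|$ and $|T^{*}|$ --- so your route is the standard one. Two remarks. First, you were right to prove part (a) with $\|g(|T^{*}|)y\|$ in the second factor: as printed in the paper (with $x$ in both factors) the inequality is false for general $x,y$, since rescaling $y$ changes the left-hand side but not the right-hand side; this is a typo in the paper's statement, and it is harmless downstream because every application in the paper takes $x=y$. Second, your transfer rule $Ug(|T|)U^{*}=g(|T^{*}|)-g(0)P$ is correct, and your observation that the correction term has the favourable sign does the job; but you could avoid the correction term altogether: from $|T|U^{*}=U^{*}|T^{*}|$ one gets, first for polynomials and then by uniform approximation, the intertwining relation $g(|T|)U^{*}=U^{*}g(|T^{*}|)$ for every continuous $g$ (constants commute with $U^{*}$ trivially, so no condition $g(0)=0$ is needed), whence $\|g(|T|)U^{*}y\|=\|U^{*}g(|T^{*}|)y\|\le\|g(|T^{*}|)y\|$ because $\|U^{*}\|\le 1$. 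Your derivation of part (b) as the specialization $f(t)=t^{\alpha}$, $g(t)=t^{1-\alpha}$ is exactly right.
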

Now we are in a position to state the main results of this section.\\
\begin{theorem}
Let
$T=\left[\begin{array}{cc}
 0&B\\
 C&0
 \end{array}\right]\in {\mathbb B}({\mathscr H_2,\mathscr H_1})$ and  $f$, $g$ be nonnegative  continuous  functions on $[0, \infty)$ satisfying the relation $f(t)g(t)=t\,(t\in[0, \infty))$. Then
 \begin{align}\label{7}
 w^{r}(T)\leq \max \left\{ \left\| \frac{1}{p} f^{pr} (\mid C \mid) + \frac{1}{q} g^{qr} (\mid B^{*} \mid) \right\|, \left\|\frac{1}{p} f^{pr}(\mid B \mid)+\frac{1}{q} g^{qr}(\mid C^{*}\mid) \right\|\right\},
   \end{align}
 in which  $r\geq 1$, $p \geq q >1$ such that $\frac{1}{p}+\frac{1}{q}=1$ and $pr \geq 2$.\\
\end{theorem}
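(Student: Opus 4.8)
The plan is to reduce the whole statement to the single operator inequality $w^{r}(T)\le\|S\|$ for the positive operator
\[
S=\frac{1}{p}f^{pr}(|T|)+\frac{1}{q}g^{qr}(|T^{*}|),
\]
and then to read the displayed maximum off the block structure of $S$.

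First I would compute the two absolute values. Multiplying the blocks directly gives
\[
T^{*}T=\left[\begin{array}{cc}C^{*}C&0\\0&B^{*}B\end{array}\right],\qquad TT^{*}=\left[\begin{array}{cc}BB^{*}&0\\0&CC^{*}\end{array}\right],
\]
whence by the functional calculus
\[
|T|=\left[\begin{array}{cc}|C|&0\\0&|B|\end{array}\right],\qquad |T^{*}|=\left[\begin{array}{cc}|B^{*}|&0\\0&|C^{*}|\end{array}\right].
\]
Because these are diagonal, so are $f^{pr}(|T|)$ and $g^{qr}(|T^{*}|)$, and therefore
\[
S=\left[\begin{array}{cc}\frac{1}{p}f^{pr}(|C|)+\frac{1}{q}g^{qr}(|B^{*}|)&0\\0&\frac{1}{p}f^{pr}(|B|)+\frac{1}{q}g^{qr}(|C^{*}|)\end{array}\right].
\]
Since the norm of a positive diagonal operator matrix is the larger of the norms of its two diagonal entries, $\|S\|$ equals exactly the right-hand side of the asserted inequality. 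This disposes of the bookkeeping that produces the $\max$, and reduces the theorem to $w^{r}(T)\le\|S\|$.

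For the analytic heart I would prove the pointwise estimate $|\langle T\xi,\xi\rangle|^{r}\le\langle S\xi,\xi\rangle$ for every unit vector $\xi\in\mathscr H_{1}\oplus\mathscr H_{2}$. Fixing such $\xi$ and applying Lemma \ref{5}(a) with $x=y=\xi$ yields $|\langle T\xi,\xi\rangle|\le a\,b$, where $a=\|f(|T|)\xi\|$ and $b=\|g(|T^{*}|)\xi\|$. Raising to the power $r$ and invoking the two inequalities of Lemma \ref{2} gives $(ab)^{r}\le\frac{1}{p}a^{pr}+\frac{1}{q}b^{qr}$. Finally, writing $a^{pr}=\langle f^{2}(|T|)\xi,\xi\rangle^{pr/2}$ and using $(f^{2}(|T|))^{pr/2}=f^{pr}(|T|)$, the McCarty inequality (Lemma \ref{3}(a)) gives $a^{pr}\le\langle f^{pr}(|T|)\xi,\xi\rangle$, and symmetrically $b^{qr}\le\langle g^{qr}(|T^{*}|)\xi,\xi\rangle$. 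Summing with the weights $1/p$ and $1/q$ produces $|\langle T\xi,\xi\rangle|^{r}\le\langle S\xi,\xi\rangle\le\|S\|$, and taking the supremum over unit $\xi$ delivers $w^{r}(T)\le\|S\|$.

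The step I expect to be the real obstacle is threading Lemma \ref{5}(a), Lemma \ref{2} and Lemma \ref{3}(a) together with matching exponents. The delicate point is the McCarty passage, where one must know that the exponents $pr/2$ and $qr/2$ are at least $1$, so that Lemma \ref{3}(a) applies rather than its reverse form \ref{3}(b); the hypothesis $pr\ge2$ secures this for the $f$-term, and the same lower bound on the exponent is what one needs for the $g$-term. One must also pair the exponent $p$ with $f(|T|)$ and $q$ with $g(|T^{*}|)$ when invoking Lemma \ref{2}, so that the surviving powers are precisely $pr$ and $qr$ and land on the diagonal blocks in the order displayed in the statement. The remaining pieces — the absolute-value computation, the diagonal-norm identity, and the final supremum — are routine.
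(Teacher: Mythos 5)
Your proof is essentially the paper's own argument in different packaging: the identical chain Lemma~\ref{5}(a) $\to$ Lemma~\ref{2} $\to$ Lemma~\ref{3}(a), applied after observing that $|T|$ and $|T^{*}|$ are the diagonal operators $\mathrm{diag}(|C|,|B|)$ and $\mathrm{diag}(|B^{*}|,|C^{*}|)$, with your norm identity for the positive diagonal operator $S$ playing the role the paper assigns to Lemma~\ref{1}(a). The fine point you flag at the end is genuinely unresolved in both arguments: the stated hypotheses ($p\ge q$, $pr\ge 2$) guarantee $pr/2\ge 1$ but not $qr/2\ge 1$ (take $p=3$, $q=3/2$, $r=1$), and the paper, like you, applies Lemma~\ref{3}(a) to the $g$-term without securing $qr\ge 2$.
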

\begin {proof}
For any unit vector $X=\left[\begin{array}{cc}
 x_1\\
 x_2
 \end{array}\right] \in \mathscr {H}_1\oplus \mathscr {H}_2$ we have
 \begin{align*}
 \mid \langle &TX, X \rangle \mid^{r}\\
 &\leq \parallel f\left(\mid T \mid\right)X\parallel^{r} \parallel g\left(\mid T^{*}\mid \right)X\parallel ^{r}    \qquad (\textrm {by Lemma\,\,}\ref{5})
  \\&= \langle f^{2}(\mid T \mid)X, X\rangle^{\frac{r}{2}} \langle g^{2}(\mid T^{*} \mid)X, X\rangle^{\frac{r}{2}}
 \\&
 \leq \frac{1}{p}\left\langle f^{2}\left(
 \left[\begin{array}{cc}
 \mid C \mid & 0\\
 0 & \mid B \mid
 \end{array}\right]
 \right)
 X, X\right\rangle ^{\frac{pr}{2}}+\frac{1}{q} \left\langle g^{2}\left(
 \left[\begin{array}{cc}
 \mid B^{*} \mid & 0\\
 0 & \mid C^{*} \mid
 \end{array}\right]
 \right)
 X, X\right\rangle ^{\frac{qr}{2}}  \\&     \qquad \qquad \qquad     \qquad (\textrm { by Lemma }\ref{2})
 \\&
 \leq \frac{1}{p}\left\langle \left[\begin{array}{cc}
 f^{pr} \mid C \mid & 0\\
 0 & f^{pr} \mid B \mid
 \end{array}\right]
 X, X\right\rangle +\frac{1}{q} \left\langle \left[\begin{array}{cc}
 g^{qr} \mid B^{*} \mid & 0\\
 0 & g^{qr} \mid C^{*} \mid
 \end{array}\right]
 X, X\right\rangle\\&\qquad   \qquad \qquad \qquad             (\textrm {by Lemma}\, \ref{3}\textrm{(a)})
 \\&
 =\left\langle \left[\begin{array}{cc}
 \frac{1}{p} f^{pr} (\mid C \mid) +\frac{1}{q} g^{qr} (\mid B^{*}\mid) & 0\\
 0 & \frac{1}{p}f^{pr} (\mid B \mid)+\frac{1}{q} g^{qr}(\mid C^{*} \mid)
 \end{array}\right]
 X, X\right\rangle.
  \end{align*}
 Then
 \begin{align*}
 \mid \langle TX, X \rangle\mid^{r} \leq \left\langle \left[\begin{array}{cc}
 \frac{1}{p} f^{pr} (\mid C \mid) +\frac{1}{q} g^{qr} (\mid B^{*}\mid) & 0\\
 0 & \frac{1}{p}f^{pr} (\mid B \mid)+\frac{1}{q} g^{qr}(\mid C^{*} \mid)
 \end{array}\right]
 X, X\right\rangle .
 \end{align*}
 Now, applying the definition of numerical radius and Lemma \ref{1}(a),  we have
 \begin{align*}
 w^{r}(T)\leq \max \left\{ \left\| \frac{1}{p} f^{pr} (\mid C \mid) + \frac{1}{q} g^{qr} (\mid B^{*} \mid) \right\|, \left\|\frac{1}{p} f^{pr}(\mid B \mid)+\frac{1}{q} g^{qr}(\mid C^{*}\mid) \right\|\right\}.
 \end{align*}
\end{proof}
\begin{corollary}\cite[Corollary 3]{CAL}\label{100}
Let
$T=\left[\begin{array}{cc}
 0&B\\
 C&0
 \end{array}\right]\in {\mathbb B}({\mathscr H_2,\mathscr H_1})$
 be a positive  operator matrix and $r\geq 1$. Then
 \begin{align*}
 w(T)=\frac{1}{2}\parallel B+C \parallel.
 \end{align*}
 \end{corollary}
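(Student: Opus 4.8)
The plan is to squeeze $w(T)$ between $\tfrac12\|B+C\|$ from both sides. Throughout I read the hypothesis ``positive operator matrix'' as saying that the entries $B$ and $C$ are positive operators on a common space, so that $B^{*}=B$, $C^{*}=C$ and $B+C\ge 0$; this is exactly the regime in which the statement is non-trivial (a genuinely positive off-diagonal $T\ge 0$ would force $B=C=0$).

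For the lower bound I would invoke Lemma \ref{1}(c), which gives
\[
w(T)=\frac12\sup_{\theta\in\mathcal{R}}\|e^{i\theta}B+e^{-i\theta}C^{*}\|.
\]
Choosing $\theta=0$ and using $C^{*}=C$ immediately yields $w(T)\ge \tfrac12\|B+C^{*}\|=\tfrac12\|B+C\|$.

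For the reverse inequality I would apply the estimate \eqref{13}, that is $w(T)\le\tfrac12\| |T|+|T^{*}| \|$, after reading off the two absolute values from the block form of $T$. A direct computation gives
\[
T^{*}T=\left[\begin{array}{cc} C^{2}&0\\ 0&B^{2}\end{array}\right],
\qquad
TT^{*}=\left[\begin{array}{cc} B^{2}&0\\ 0&C^{2}\end{array}\right],
\]
and since $B,C\ge 0$ the positive square roots may be taken blockwise, so that $|T|=\left[\begin{smallmatrix} C&0\\ 0&B\end{smallmatrix}\right]$ and $|T^{*}|=\left[\begin{smallmatrix} B&0\\ 0&C\end{smallmatrix}\right]$. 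Hence $|T|+|T^{*}|$ is the block-diagonal operator with both diagonal blocks equal to $B+C$, whose norm is precisely $\|B+C\|$. Substituting into \eqref{13} gives $w(T)\le\tfrac12\|B+C\|$, and together with the lower bound this proves $w(T)=\tfrac12\|B+C\|$. (The parameter $r$ does not enter the conclusion and can be disregarded.)

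The step I expect to be the genuine obstacle is obtaining the upper bound cleanly. One could instead try to run everything through Lemma \ref{1}(c) and reduce to the scalar-looking claim $\|e^{i\theta}B+e^{-i\theta}C\|\le\|B+C\|$ for all $\theta$; but for non-commuting positive $B,C$ this norm inequality is not transparent, since $e^{i\theta}B+e^{-i\theta}C$ is not self-adjoint and its comparison with $B+C$ involves the cross terms $e^{2i\theta}\langle Bx,Cx\rangle$, whose real part need not be controlled by the real part at $\theta=0$ vector by vector. Routing the upper bound through \eqref{13}, where passing to $|T|$ and $|T^{*}|$ diagonalizes the problem, is what makes the argument short; the only care needed is the (routine) identity $(B^{2})^{1/2}=B$, $(C^{2})^{1/2}=C$, valid precisely because $B,C\ge 0$.
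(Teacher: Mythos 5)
Your proof is correct, and its skeleton is the same as the paper's: both arguments squeeze $w(T)$ between identical bounds, and both obtain the lower bound $w(T)\ge\tfrac{1}{2}\|B+C\|$ from Lemma \ref{1}(c) by taking $\theta=0$ and using $C^{*}=C$. The only difference is the provenance of the upper bound. The paper specializes the theorem immediately preceding the corollary (inequality \eqref{7}) at $f(t)=g(t)=t^{1/2}$, $r=1$, $p=q=2$, which for $B,C\ge 0$ (so that $|C|=C$, $|B^{*}|=B$, $|B|=B$, $|C^{*}|=C$) reads $w(T)\le\tfrac{1}{2}\|B+C\|$; the point of the corollary there is that the known equality of Abu-Omar and Kittaneh \cite{CAL} drops out of the paper's new theorem. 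You instead bypass that theorem and derive the same bound from the classical inequality \eqref{13} together with the blockwise computation $|T|=\left[\begin{smallmatrix}C&0\\0&B\end{smallmatrix}\right]$, $|T^{*}|=\left[\begin{smallmatrix}B&0\\0&C\end{smallmatrix}\right]$. These are not genuinely different estimates: for off-diagonal $T$ one always has $|T|=\left[\begin{smallmatrix}|C|&0\\0&|B|\end{smallmatrix}\right]$ and $|T^{*}|=\left[\begin{smallmatrix}|B^{*}|&0\\0&|C^{*}|\end{smallmatrix}\right]$, so inequality \eqref{7} at those parameters is precisely $\tfrac{1}{2}\||T|+|T^{*}|\|$, i.e.\ precisely \eqref{13} applied to $T$. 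What your route buys is self-containedness (only material from the introduction and Lemma \ref{1} is used); what the paper's route buys is the rhetorical purpose of the corollary, namely exhibiting a known result as a consequence of its theorem. Two further points in your favor: your reading of ``positive operator matrix'' as $B,C\ge 0$ is the correct one — it is what \cite{CAL} means, and it is tacitly required by the paper's own one-line proof, since a literally positive off-diagonal $T$ would force $B=C=0$ — and you are right that the hypothesis $r\ge 1$ in the statement plays no role.
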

 \begin{proof}
 Putting $f(t)=g(t)=t^{\frac{1}{2}}, r=1$ and $p=q=2$ in inequality \eqref{7} and applying Lemma \ref{1}(c), we get the equality.
 \end{proof}

  \begin{theorem}
Let
$T=\left[\begin{array}{cc}
 0&B\\
 C&0
 \end{array}\right]\in {\mathbb B}({\mathscr H_2,\mathscr H_1})$ and  $f$, $g$ be nonnegative  continuous  functions on $[0, \infty)$ satisfying the relation $f(t)g(t)=t\,\,(t\in [0, \infty))$. Then
 \begin{align}\label{6}
 w^{2r}(T)\leq \max \left\{\left\| \frac{1}{p} f^{2pr}(\mid C \mid)+\frac{1}{q} g^{2qr}(\mid B^{*}\mid)\right\| , \left\| \frac{1}{p} f^{2pr}(\mid B \mid)+\frac{1}{q} g^{2qr}(\mid C^{*}\mid)\right\| \right\},
   \end{align}
 where $r\geq 1$ and $p \geq q >1$ such that $\frac{1}{p}+\frac{1}{q}=1$ and $pr \geq 1$.\\
\end{theorem}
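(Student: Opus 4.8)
The plan is to adapt the proof of the previous theorem almost verbatim, the only change being that I feed the $2r$-th power of $|\langle TX,X\rangle|$ into Lemma \ref{5}(a) instead of the $r$-th power. Fix a unit vector $X=\left[\begin{smallmatrix} x_1\\ x_2\end{smallmatrix}\right]\in\mathscr H_1\oplus\mathscr H_2$. Since $f,g$ are nonnegative and continuous, $f(|T|)$ and $g(|T^*|)$ are positive, hence self-adjoint, so Lemma \ref{5}(a) gives
\begin{align*}
|\langle TX,X\rangle|^{2r}\leq\|f(|T|)X\|^{2r}\,\|g(|T^*|)X\|^{2r}=\langle f^2(|T|)X,X\rangle^{r}\,\langle g^2(|T^*|)X,X\rangle^{r}.
\end{align*}

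Next I would apply the Young inequality of Lemma \ref{2} to the two factors $a=\langle f^2(|T|)X,X\rangle^{r}$ and $b=\langle g^2(|T^*|)X,X\rangle^{r}$, obtaining
\begin{align*}
|\langle TX,X\rangle|^{2r}\leq\frac1p\langle f^2(|T|)X,X\rangle^{pr}+\frac1q\langle g^2(|T^*|)X,X\rangle^{qr}.
\end{align*}
I would then invoke McCarty's inequality (Lemma \ref{3}(a)) for the positive operators $f^2(|T|)$ and $g^2(|T^*|)$ to pass from scalar powers to operator powers, namely $\langle f^2(|T|)X,X\rangle^{pr}\leq\langle f^{2pr}(|T|)X,X\rangle$ and $\langle g^2(|T^*|)X,X\rangle^{qr}\leq\langle g^{2qr}(|T^*|)X,X\rangle$. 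This is exactly the point where the hypothesis enters: Lemma \ref{3}(a) needs exponents at least $1$, and here the relevant exponents are $pr$ and $qr$, so it suffices to require $pr\geq1$ (the companion condition $qr\geq q>1$ being automatic from $r\geq1$ and $q>1$). This explains why the threshold relaxes to $pr\geq1$, as opposed to $pr\geq2$ in the previous theorem, where the exponents appearing were $pr/2$ and $qr/2$.

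To finish I would exploit the block-diagonal functional calculus. Writing $|T|=\left[\begin{smallmatrix}|C|&0\\0&|B|\end{smallmatrix}\right]$ and $|T^*|=\left[\begin{smallmatrix}|B^*|&0\\0&|C^*|\end{smallmatrix}\right]$, which follow from the block-diagonal form of $T^*T$ and $TT^*$, the operator $\frac1p f^{2pr}(|T|)+\frac1q g^{2qr}(|T^*|)$ is diagonal with diagonal blocks $\frac1p f^{2pr}(|C|)+\frac1q g^{2qr}(|B^*|)$ and $\frac1p f^{2pr}(|B|)+\frac1q g^{2qr}(|C^*|)$. Substituting this back, taking the supremum over unit vectors $X$, and applying Lemma \ref{1}(a) to the resulting diagonal operator matrix yields the asserted inequality. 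I do not anticipate any serious obstacle: the argument is structurally identical to the previous theorem, and the only subtlety is the exponent bookkeeping that legitimizes the weaker hypothesis $pr\geq1$.
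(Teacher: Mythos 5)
Your proposal is correct and follows exactly the paper's own proof: Lemma \ref{5}(a) followed by Young's inequality (Lemma \ref{2}) applied to $\langle f^{2}(|T|)X,X\rangle^{r}$ and $\langle g^{2}(|T^{*}|)X,X\rangle^{r}$, then Lemma \ref{3}(a) with exponents $pr$ and $qr$, the block-diagonal identification of $|T|$ and $|T^{*}|$, and finally Lemma \ref{1}(a). Your remark on why the hypothesis relaxes to $pr\geq 1$ here (versus $pr\geq 2$ before, where the McCarty exponents were $pr/2$ and $qr/2$) is a correct reading of the exponent bookkeeping that the paper leaves implicit.
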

\begin {proof}
Assume that $X=\left[\begin{array}{cc}
 x_1\\
 x_2
 \end{array}\right] \in \mathscr {H}_1\oplus \mathscr {H}_2$ is a unit vector. Then
 \begin{align*}
 \mid \langle &TX, X \rangle \mid^{2r}
  \\& \leq \parallel f(\mid T \mid)X\parallel^{2r} \parallel g(\mid T^{*}\mid X\parallel ^{2r}       \qquad (\textrm {by Lemma}\,\, \ref{5})
  \\&
 = \langle f^{2}(\mid T \mid)X, X\rangle^{r} \langle g^{2}(\mid T^{*} \mid)X, X\rangle^{r}
 \\
 & \leq \frac{1}{p}\left\langle f^{2}\left(
 \left[\begin{array}{cc}
 \mid C \mid & 0\\
 0 & \mid B \mid
 \end{array}\right]
\right )
 X, X\right\rangle ^{rp}+\frac{1}{q} \left\langle g^{2}\left(
 \left[\begin{array}{cc}
 \mid B^{*} \mid & 0\\
 0 & \mid C^{*} \mid
 \end{array}\right]
 \right)
 X, X\right\rangle ^{rq}\\&\qquad \qquad \qquad            \qquad (\textrm {by Lemma} \,\,\ref{2})
 \\&
  \leq \frac{1}{p}\left\langle \left[\begin{array}{cc}
 f^{2pr} \mid C \mid & 0\\
 0 & f^{2pr} \mid B \mid
 \end{array}\right]
 X, X\right\rangle +\frac{1}{q} \left\langle \left[\begin{array}{cc}
 g^{2qr} \mid B^{*} \mid & 0\\
 0 & g^{2qr} \mid C^{*} \mid
 \end{array}\right]
 X, X\right\rangle\\&\qquad \qquad \qquad    \qquad (\textrm {by Lemma}\,\,\ref{3}\textrm{(a)})
 \\&
 =\left\langle \left[\begin{array}{cc}
 \frac{1}{p} f^{2pr} (\mid C \mid) +\frac{1}{q} g^{2qr} (\mid B^{*}\mid) & 0\\
 0 & \frac{1}{p}f^{2pr} (\mid B \mid)+\frac{1}{q} g^{2qr}(\mid C^{*} \mid)
 \end{array}\right]
 X, X\right\rangle.
 \end{align*}
 Thus
  \begin{align*}
 \mid \langle TX, X\rangle \mid^{2r}\leq \left\langle \left[\begin{array}{cc}
 \frac{1}{p} f^{2pr} (\mid C \mid) +\frac{1}{q} g^{2qr} (\mid B^{*}\mid) & 0\\
 0 & \frac{1}{p}f^{2pr} (\mid B \mid)+\frac{1}{q} g^{2qr}(\mid C^{*} \mid)
 \end{array}\right]
 X, X\right\rangle.
 \end{align*}
  Now by the definition of numerical radius and Lemma \ref{1}(a), we have
 \begin{align*}
 w^{2r}(T)\leq \max \left\{\left\| \frac{1}{p} f^{2pr}(\mid C \mid)+\frac{1}{q} g^{2qr}(\mid B^{*}\mid)\right\| , \left\| \frac{1}{p} f^{2pr}(\mid B \mid)+\frac{1}{q} g^{2qr}(\mid C^{*}\mid)\right\| \right\}.
 \end{align*}
\end{proof}
Inequality \eqref{6} induces several numerical radius inequalities as follows.
\begin{corollary}
Let
$T=\left[\begin{array}{cc}
 0&B\\
 C&0
 \end{array}\right]\in {\mathbb B}({\mathscr H_2,\mathscr H_1})$. Then
\begin{align*}
 w^{2r}(T)\leq \frac{1}{2}\max\{\parallel \mid C \mid^{4r\alpha}+\mid B^{*} \mid^{4r(1-\alpha)} \parallel, \parallel \mid B \mid^{4r\alpha}+\mid C^{*} \mid^{4r(1-\alpha)} \parallel \}
 \end{align*}
 for any $r\geq 1$ and $0\leq \alpha\leq1$.
\end{corollary}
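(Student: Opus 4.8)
The plan is to derive this corollary as a direct specialization of inequality \eqref{6} from the preceding theorem. The only freedom in \eqref{6} is the choice of the pair $(f,g)$ subject to $f(t)g(t)=t$ and of the conjugate exponents $(p,q)$; the target inequality fixes these choices uniquely. Looking at the exponents $4r\alpha$ and $4r(1-\alpha)$ appearing in the statement, and comparing them against the exponents $2pr$ and $2qr$ in \eqref{6}, I read off that I should take the power functions
\[
f(t)=t^{\alpha}, \qquad g(t)=t^{1-\alpha}, \qquad 0\le\alpha\le1,
\]
together with $p=q=2$.

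First I would verify that this choice is admissible, i.e.\ that it meets all the standing hypotheses of the theorem that produced \eqref{6}. The functions $f(t)=t^{\alpha}$ and $g(t)=t^{1-\alpha}$ are nonnegative and continuous on $[0,\infty)$ for every $\alpha\in[0,1]$, and they satisfy the compatibility relation $f(t)g(t)=t^{\alpha}\,t^{1-\alpha}=t$. The exponents $p=q=2$ satisfy $p\ge q>1$ and $\tfrac1p+\tfrac1q=1$, and since $r\ge1$ we have $pr=2r\ge1$, so the constraint $pr\ge1$ holds as well. Hence all hypotheses of the theorem are in force and \eqref{6} applies verbatim with these data.

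Next I would substitute and simplify. With $f(t)=t^{\alpha}$ the functional calculus gives $f^{2pr}(|C|)=|C|^{2pr\alpha}=|C|^{4r\alpha}$ (using $p=2$), and likewise $g^{2qr}(|B^{*}|)=|B^{*}|^{2qr(1-\alpha)}=|B^{*}|^{4r(1-\alpha)}$; the analogous identities hold for the entries $|B|$ and $|C^{*}|$ in the second slot of the maximum. Since $\tfrac1p=\tfrac1q=\tfrac12$, the first term inside the maximum becomes
\[
\Bigl\|\tfrac12\,|C|^{4r\alpha}+\tfrac12\,|B^{*}|^{4r(1-\alpha)}\Bigr\|
=\tfrac12\,\bigl\|\,|C|^{4r\alpha}+|B^{*}|^{4r(1-\alpha)}\,\bigr\|,
\]
and symmetrically for the second term with $|B|$ and $|C^{*}|$. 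Pulling the common factor $\tfrac12$ outside the maximum yields precisely the claimed bound.

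I do not anticipate a genuine obstacle here, since the argument is a clean specialization rather than a new estimate; the only point requiring care is the bookkeeping that confirms $p=q=2$ forces the exponent doubling ($2pr=4r$, $2qr=4r$) and simultaneously produces the prefactor $\tfrac12$. One auxiliary remark worth including is that for $0<\alpha<1$ the function $t^{\alpha}$ is continuous up to $t=0$ (with value $0$ there), and for the endpoint values $\alpha=0,1$ one of $f,g$ is the constant $1$ and the other is the identity, so continuity on $[0,\infty)$ and the relation $f(t)g(t)=t$ persist throughout the whole range $0\le\alpha\le1$.
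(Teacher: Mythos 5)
Your proof is correct and is exactly the paper's argument: the paper likewise obtains this corollary by putting $f(t)=t^{\alpha}$, $g(t)=t^{1-\alpha}$ and $p=q=2$ in inequality \eqref{6}. Your additional verification of the hypotheses and the exponent bookkeeping ($2pr=2qr=4r$, prefactor $\tfrac12$) is just a more detailed write-up of the same specialization.
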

\begin{proof}
Letting $f(t)=t^{\alpha}, g(t)=t^{1-\alpha}$ and $p=q=2$ in inequality \eqref{6}, we get the desired inequality.
\end{proof}
\begin{corollary}\label{101}
Let $B\in {\mathbb B}(\mathscr H)$,  $0\leq \alpha \leq 1$ and $r\geq1$. Then
\begin{align}\label{pow}
 w^{2r}(B) \leq \frac{1}{2} \parallel \mid B \mid^{4r\alpha} + \mid B^{*} \mid^{4r(1-\alpha)}\parallel.
 \end{align}
 \end{corollary}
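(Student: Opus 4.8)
The plan is to deduce this inequality as the diagonal special case $C=B$ of the corollary immediately preceding it. First I would set $C=B$, so that the off-diagonal matrix becomes $T=\left[\begin{array}{cc} 0&B\\ B&0 \end{array}\right]$. By the \emph{in particular} clause at the end of Lemma \ref{1}, one has $w(T)=w(B)$, and therefore $w^{2r}(T)=w^{2r}(B)$ for every $r\geq 1$. This identification is the only place where a genuine fact about numerical radius is invoked.

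Next I would substitute $C=B$ (and hence $C^{*}=B^{*}$) into the right-hand side of the preceding corollary. Its bound reads $\tfrac12\max\bigl\{\,\||C|^{4r\alpha}+|B^{*}|^{4r(1-\alpha)}\|,\ \||B|^{4r\alpha}+|C^{*}|^{4r(1-\alpha)}\|\,\bigr\}$, and under the substitution both entries of the maximum collapse to the single quantity $\||B|^{4r\alpha}+|B^{*}|^{4r(1-\alpha)}\|$. Combining this with $w^{2r}(T)=w^{2r}(B)$ produces the asserted inequality for all $0\leq\alpha\leq1$ and $r\geq 1$.

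There is essentially no analytic obstacle at this stage: the entire quantitative content has already been established in the preceding corollary (itself obtained from inequality \eqref{6} by choosing $f(t)=t^{\alpha}$, $g(t)=t^{1-\alpha}$ and $p=q=2$), so the statement is a clean specialization. The only step warranting care is confirming that setting $C=B$ genuinely makes the two competing norms inside the maximum equal, and that the reduction $w(T)=w(B)$ is exactly the particular case recorded in Lemma \ref{1}; once these two observations are in place, the result follows immediately.
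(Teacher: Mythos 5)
Your proof is correct and matches the paper's argument in all essentials: the paper likewise specializes inequality \eqref{6} with $f(t)=t^{\alpha}$, $g(t)=t^{1-\alpha}$, $p=q=2$ and $C=B$, and uses Lemma \ref{1}(d) (of which the \emph{in particular} clause you cite is the stated special case) to identify $w(T)$ with $w(B)$. Routing the specialization through the preceding corollary rather than directly through \eqref{6} is only a cosmetic difference.
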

\begin{proof}
We put $f(t)=t^{\alpha}, g(t)=t^{1-\alpha}, p=q=2$ and
 $T=\left[\begin{array}{cc}
 0&B\\
 B&0
 \end{array}\right]$ and apply Lemma \ref{1}(d),  we get the desired result.
\end{proof}

 \begin{theorem}
 Let
 $T_{i}=\left[\begin{array}{cc}
 0&B_{i}\\
 C_{i}&0
 \end{array}\right]
 \in {\mathbb B}({\mathscr H_2\oplus\mathscr H_1})$ for any $i=1, 2,\cdots,n$. Then
 {\footnotesize\begin{align}
 w_{p}^{p}(T_{1}, T_{2},\cdots,T_{n})\leq \max\left\{\left\| \sum_{i=1}^{n} \alpha \mid C_{i}\mid^{p}+(1-\alpha)\mid B_{i}^{*}\mid^{p}\right\|, \left\| \sum_{i=1}^{n} \alpha \mid B_{i}\mid^{p}+(1-\alpha)\mid C_{i}^{*}\mid^{p}\right\|\right\}
 \end{align}}
 for $0\leq \alpha\leq 1 $ and $p\geq 2$.
 \end{theorem}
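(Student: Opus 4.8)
The plan is to follow the template of the two preceding theorems, but to work with the functional $w_p$ rather than $w$ and to use the mixed estimate of Lemma~\ref{5}(b), whose internal parameter $\alpha$ is precisely the $\alpha$ of the statement, in place of the combination of Lemma~\ref{5}(a) and Lemma~\ref{2}. Throughout, fix a unit vector $X=\left[\begin{array}{c} x_1 \\ x_2 \end{array}\right]$ in the underlying orthogonal sum.

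First I would compute the block absolute values. Since $T_i^{*}T_i$ and $T_iT_i^{*}$ are diagonal operator matrices, one finds that $|T_i|$ is the diagonal operator matrix with entries $|C_i|$ and $|B_i|$, while $|T_i^{*}|$ is the diagonal operator matrix with entries $|B_i^{*}|$ and $|C_i^{*}|$; consequently, for every $s>0$ the powers $|T_i|^{s}$ and $|T_i^{*}|^{s}$ are the corresponding diagonal matrices with these entries raised to the $s$-th power.

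Next, apply Lemma~\ref{5}(b) with $x=y=X$ to each $T_i$ and raise the resulting inequality to the power $p/2$, so that $|\langle T_iX,X\rangle|^{p}\le a_i^{p/2}b_i^{p/2}$, where $a_i=\langle|T_i|^{2\alpha}X,X\rangle$ and $b_i=\langle|T_i^{*}|^{2(1-\alpha)}X,X\rangle$. Writing $a_i^{p/2}b_i^{p/2}=\bigl(a_i^{p/(2\alpha)}\bigr)^{\alpha}\bigl(b_i^{p/(2(1-\alpha))}\bigr)^{1-\alpha}$ and invoking the weighted arithmetic--geometric mean inequality produces the bound $\alpha\,a_i^{p/(2\alpha)}+(1-\alpha)\,b_i^{p/(2(1-\alpha))}$. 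Because $p\ge 2$ and $0\le\alpha\le 1$, both exponents $p/(2\alpha)$ and $p/(2(1-\alpha))$ are at least $1$, so Lemma~\ref{3}(a) yields $a_i^{p/(2\alpha)}\le\langle|T_i|^{p}X,X\rangle$ and $b_i^{p/(2(1-\alpha))}\le\langle|T_i^{*}|^{p}X,X\rangle$. Combining these gives $|\langle T_iX,X\rangle|^{p}\le\langle(\alpha|T_i|^{p}+(1-\alpha)|T_i^{*}|^{p})X,X\rangle$ for each $i$.

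Finally, summing over $i$ and inserting the diagonal forms from the first step gives $\sum_{i=1}^{n}|\langle T_iX,X\rangle|^{p}\le\langle SX,X\rangle$, where $S\ge 0$ is the diagonal operator matrix whose two diagonal blocks are $\sum_{i=1}^{n}(\alpha|C_i|^{p}+(1-\alpha)|B_i^{*}|^{p})$ and $\sum_{i=1}^{n}(\alpha|B_i|^{p}+(1-\alpha)|C_i^{*}|^{p})$. Taking the supremum over unit vectors $X$ converts the left-hand side into $w_p^{p}(T_1,\dots,T_n)$, while positivity of $S$ makes the supremum of $\langle SX,X\rangle$ equal to $\|S\|$, which by Lemma~\ref{1}(a) is the maximum of the norms of the two diagonal blocks; this is exactly the asserted inequality. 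The step that needs the most care is the weighted Young splitting together with the verification that the McCarthy exponents remain $\ge 1$; the endpoints $\alpha\in\{0,1\}$, where those exponents degenerate, should be handled separately, but there the conclusion follows immediately from Lemma~\ref{5}(b) and a single application of Lemma~\ref{3}(a).
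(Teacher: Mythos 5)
Your proof is correct and takes essentially the same approach as the paper: both start from Lemma \ref{5}(b) applied to each $T_i$, combine the McCarthy inequality (Lemma \ref{3}) with the weighted arithmetic--geometric mean inequality (Lemma \ref{2}) to reach the per-term bound $|\langle T_iX,X\rangle|^p\le\langle(\alpha|T_i|^p+(1-\alpha)|T_i^{*}|^p)X,X\rangle$, and finish with the diagonal block forms of $|T_i|$, $|T_i^{*}|$ and Lemma \ref{1}(a). The only differences are the order of the middle steps (you apply the weighted AM--GM splitting first and then McCarthy once, while the paper applies McCarthy to shift exponents and then Lemma \ref{2}) and your explicit treatment of the endpoints $\alpha\in\{0,1\}$, a point of care the paper passes over silently.
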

 \begin{proof}
 For any unit vector $X=\left[\begin{array}{cc}
 x_1\\
 x_2
 \end{array}\right]
 \in {\mathscr H_1\oplus\mathscr H_2}$, we have\\
 {\footnotesize\begin{align*}
 \sum_{i=1}^{n}\mid\langle T_{i}X, X\rangle\mid^{p}
 &=\sum_{i=1}^{n}(\mid\langle T_{i}X, X\rangle\mid^{2})^{\frac{p}{2}}\\&
 \leq \sum_{i=1}^{n}(\langle\mid T_{i}\mid^{2\alpha}X, X\rangle \langle \mid T_{i}^{*}\mid^{2(1-\alpha)}X, X\rangle)^{\frac{p}{2}}   \qquad (\textrm {by Lemma}\, \ref{5} \,(b))\\&
 \leq \sum_{i=1}^{n}\langle \mid T_{i}\mid^{p\alpha}X, X\rangle\langle \mid T_{i}^{*}\mid^{p(1-\alpha)}X, X\rangle
 \qquad  \qquad (\textrm {by Lemma}\, \ref{3}\,(b))     \\&
 \leq \sum_{i=1}^{n}\langle \mid T_{i}\mid^{p}X, X\rangle^{\alpha}\langle\mid T_{i}^{*}\mid^{p}X, X\rangle^{1-\alpha}\\&
 \leq\sum_{i=1}^{n}(\alpha \langle\mid T_{i}\mid^{p}X, X\rangle+(1-\alpha)\langle \mid T_{i}^{*}\mid^{p}X, X\rangle)
 \qquad (\textrm {by Lemma}\, \ref{2})  \\&
 =\sum_{i=1}^{n}\left(
 \alpha\left\langle
 \left[\begin{array}{cc}
 \mid C_{i}\mid^{p}&0\\
 0&\mid B_{i}\mid^{p}
 \end{array}\right]
 X, X\right\rangle + (1-\alpha)\left\langle
 \left[\begin{array}{cc}
 \mid B_{i}^{*}\mid^{p}&0\\
 0&\mid C_{i}^{*}\mid^{p}
 \end{array}\right]
 X, X\right\rangle
\right)
\\&
=\sum_{i=1}^{n} \left\langle
\left[\begin{array}{cc}
 \alpha\mid C_{i}\mid^{p}+(1-\alpha)\mid B_{i}^{*}\mid^{p}&0\\
 0&\alpha \mid B_{i}\mid^{p}+(1-\alpha)\mid C_{i}^{*}\mid^{p}
 \end{array}\right]
 X, X\right\rangle\\&
 =\left\langle
 \left[\begin{array}{cc}
 \sum_{i=1}^{n}\alpha\mid C_{i}\mid^{p}+(1-\alpha)\mid B_{i}^{*}\mid^{p}&0\\
 0& \sum_{i=1}^{n}\alpha \mid B_{i}\mid^{p}+(1-\alpha)\mid C_{i}^{*}\mid^{p}
 \end{array}\right]
 X, X\right\rangle.
 \end{align*}}
 By the definition of numerical radius and Lemma \ref{1}, we have\\
 {\footnotesize\begin{align*}
 w_{p}^{p}(T_{1}, T_{2},\cdots,T_{n})\leq \max\left\{\left\| \sum_{i=1}^{n} \alpha \mid C_{i}\mid^{p}+(1-\alpha)\mid B_{i}^{*}\mid^{p}\right\|, \left\| \sum_{i=1}^{n} \alpha \mid B_{i}\mid^{p}+(1-\alpha)\mid C_{i}^{*}\mid^{p}\right\|\right\}.
 \end{align*}}
 \end{proof}
 \begin{remark}
 As a special case for $\alpha=\frac{1}{2}$ and $B_{i}=C_{i}$ for any $i=1, 2,\cdots,n$, we have the following inequality \\
 \begin{align*}
 w_{p}^{p}(B_{1},B_{2},\cdots,B_{n})\leq \frac{1}{2}\parallel \sum_{i=1}^{n}\mid B_{i}\mid^{p}+\mid B_{i}^{*}\mid^{p}\parallel,
 \end{align*}
 which already shown in \cite[Proposition 3.9]{FUJ2}.
 \end{remark}
 Now using a refinement of the classical Young inequality, we have the following theorem.\\
\begin{theorem}
Let
$T=\left[\begin{array}{cc}
 0&B\\
 C&0
 \end{array}\right]\in {\mathbb B}({\mathscr H_2,\mathscr H_1})$ and  $f$, $g$ be nonnegative  continuous  functions on $[0, \infty)$  satisfying the relation $f(t)g(t)=t$ $(t\in [0, \infty))$. Then for $m=1,2,\cdots$ and $p, r\geq m$
\begin{align}\label{11}
 w^{r}(T)\leq (\frac{1}{2})^{m} \max\{\parallel f^{\frac{2r}{m}}\mid C \mid+g^{\frac{2r}{m}}\mid B^{*}\mid\parallel^{m}, \parallel f^{\frac{2r}{m}}\mid B \mid+g^{\frac{2r}{m}}\mid C^{*}\mid \parallel^{m} \}-\inf_{\|X\|=1} \zeta (X),
 \end{align}
 where
 {\footnotesize\begin{align*}
 \zeta (X)=2^{-m}\left(\left\langle f^{\frac{2r}{m}}
 \left[\begin{array}{cc}
 \mid C\mid&0\\
 0&\mid B \mid
 \end{array}\right]
 X, X\right\rangle^{\frac{m}{2}}-\left\langle g^{\frac{2r}{m}}
 \left[\begin{array}{cc}
 \mid B^{*}\mid&0\\
 0&\mid C^{*}\mid
 \end{array}\right]
 X, X\right\rangle^{\frac{m}{2}}\right)^{2}.
 \end{align*}}
 \end{theorem}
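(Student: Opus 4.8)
The plan is to follow the template of the two preceding theorems, with two changes: the ordinary Young step (Lemma \ref{2}) is replaced by the refined Young inequality \eqref{12}, and, crucially, the McCarty inequality (Lemma \ref{3}(a)) is applied \emph{before} the Young step rather than after it. Fix a unit vector $X=\left[\begin{smallmatrix}x_{1}\\x_{2}\end{smallmatrix}\right]\in\mathscr{H}_{1}\oplus\mathscr{H}_{2}$ and abbreviate $u=\langle f^{2}(\mid T\mid)X,X\rangle$, $v=\langle g^{2}(\mid T^{*}\mid)X,X\rangle$, $U=\langle f^{\frac{2r}{m}}(\mid T\mid)X,X\rangle$ and $V=\langle g^{\frac{2r}{m}}(\mid T^{*}\mid)X,X\rangle$. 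As in the earlier computations $\mid T\mid=\left[\begin{smallmatrix}\mid C\mid&0\\0&\mid B\mid\end{smallmatrix}\right]$ and $\mid T^{*}\mid=\left[\begin{smallmatrix}\mid B^{*}\mid&0\\0&\mid C^{*}\mid\end{smallmatrix}\right]$. Using Lemma \ref{5}(a) and the identity $\|f(\mid T\mid)X\|^{2}=\langle f^{2}(\mid T\mid)X,X\rangle$, I first obtain
\begin{align*}
\mid\langle TX,X\rangle\mid^{r}\leq\|f(\mid T\mid)X\|^{r}\,\|g(\mid T^{*}\mid)X\|^{r}=u^{\frac{r}{2}}\,v^{\frac{r}{2}}.
\end{align*}

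Since $r\geq m$, the exponent $\frac{r}{m}\geq 1$, so I write $u^{\frac{r}{2}}v^{\frac{r}{2}}=\big(u^{\frac{r}{m}}\big)^{\frac{m}{2}}\big(v^{\frac{r}{m}}\big)^{\frac{m}{2}}$ and apply Lemma \ref{3}(a) to the positive operators $f^{2}(\mid T\mid)$ and $g^{2}(\mid T^{*}\mid)$; this gives $u^{\frac{r}{m}}\leq U$ and $v^{\frac{r}{m}}\leq V$, hence
\begin{align*}
\mid\langle TX,X\rangle\mid^{r}\leq U^{\frac{m}{2}}V^{\frac{m}{2}}=\big(U^{\frac{1}{2}}V^{\frac{1}{2}}\big)^{m}.
\end{align*}
Now I invoke the refined Young inequality \eqref{12} with $a=U$ and $b=V$, which yields
\begin{align*}
\big(U^{\frac{1}{2}}V^{\frac{1}{2}}\big)^{m}\leq 2^{-m}(U+V)^{m}-2^{-m}\big(U^{\frac{m}{2}}-V^{\frac{m}{2}}\big)^{2}=2^{-m}(U+V)^{m}-\zeta(X).
\end{align*}

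It remains to estimate the first term. The operators $f^{\frac{2r}{m}}(\mid T\mid)$ and $g^{\frac{2r}{m}}(\mid T^{*}\mid)$ are block diagonal, so their sum is the positive operator $P=\left[\begin{smallmatrix}D_{1}&0\\0&D_{2}\end{smallmatrix}\right]$ with $D_{1}=f^{\frac{2r}{m}}(\mid C\mid)+g^{\frac{2r}{m}}(\mid B^{*}\mid)$ and $D_{2}=f^{\frac{2r}{m}}(\mid B\mid)+g^{\frac{2r}{m}}(\mid C^{*}\mid)$, and $U+V=\langle PX,X\rangle$. Since $P\geq 0$, the definition of the numerical radius and Lemma \ref{1}(a) give $U+V=\langle PX,X\rangle\leq w(P)=\max\{\|D_{1}\|,\|D_{2}\|\}$, so that $2^{-m}(U+V)^{m}\leq 2^{-m}\max\{\|D_{1}\|^{m},\|D_{2}\|^{m}\}$. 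Substituting into the chain above and using $\zeta(X)\geq\inf_{\|Y\|=1}\zeta(Y)$, I obtain for every unit vector $X$
\begin{align*}
\mid\langle TX,X\rangle\mid^{r}\leq 2^{-m}\max\{\|D_{1}\|^{m},\|D_{2}\|^{m}\}-\inf_{\|Y\|=1}\zeta(Y).
\end{align*}
Taking the supremum over all unit vectors $X$ and recalling $w^{r}(T)=\sup_{\|X\|=1}\mid\langle TX,X\rangle\mid^{r}$ gives exactly \eqref{11}.

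The step that requires care—and the reason the order of the two lemmas is essential—is the creation of the correction term. Applying McCarty first forces the subtracted square to appear precisely as $\zeta(X)=2^{-m}\big(U^{\frac{m}{2}}-V^{\frac{m}{2}}\big)^{2}$, with the functional powers $f^{\frac{2r}{m}}$ and $g^{\frac{2r}{m}}$ already built in, matching the statement verbatim. Had I applied the refined Young step first, the correction would instead read $2^{-m}\big(u^{\frac{r}{2}}-v^{\frac{r}{2}}\big)^{2}$, and because McCarty \emph{increases} both $u^{\frac{r}{2}}$ and $v^{\frac{r}{2}}$ it could not be passed to $\zeta(X)$ in the admissible direction. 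Hence sequencing McCarty before the refined Young estimate is the crux of the argument.
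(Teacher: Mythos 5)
Your proof is correct and takes essentially the same route as the paper: Lemma \ref{5}(a), then McCarty's inequality (Lemma \ref{3}(a)) with exponent $\frac{r}{m}\geq 1$, then the refined Young inequality \eqref{12}, and finally Lemma \ref{1}(a) applied to the positive block-diagonal operator. Your sequencing is in fact cleaner than the paper's own display, which inserts the subtracted term $\zeta(X)$ one step too early (already at the McCarty step, where it does not logically follow) rather than at the refined Young step, where you correctly produce it.
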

 \begin{proof}
 Let $X=\left[\begin{array}{cc}
 x_1\\
 x_2
 \end{array}\right]
 \in {\mathscr H_1\oplus\mathscr H_2}$ be  a unit vector. Applying Lemmas \ref{5}, \ref{3} and inequality \eqref{12}, respectively, we have
  {\footnotesize\begin{align*}
 \mid \langle TX, X\rangle\mid^{r}&\leq\parallel f(\mid T\mid)X \parallel^{r}\parallel g(\mid T^{*}\mid)X\parallel^{r}\\&
 =\left(\langle f^{2}(\mid T\mid)X, X\rangle^{\frac{r}{2m}}\langle g^{2}(\mid T^{*}\mid)X, X\rangle^{\frac{r}{2m}}\right)^{m}\\&
 \leq\left(\langle f^{\frac{2r}{m}}(\mid T\mid)X, X\rangle^{\frac{1}{2}}\langle g^{\frac{2r}{m}}(\mid T^{*}\mid)X, X\rangle^{\frac{1}{2}}\right)^{m}\\&
 -2^{-m}\left(\langle f^{\frac{2r}{m}}(\mid T\mid)X, X\rangle^{\frac{m}{2}}-\langle g^{\frac{2r}{m}}(\mid T^{*}\mid)X, X\rangle^{\frac{m}{2}}\right)^{2}\\&
 \leq\left(\frac{1}{2}\left\langle f^{\frac{2r}{m}}
\left[\begin{array}{cc}
 \mid C\mid&0\\
 0&\mid B \mid
 \end{array}\right]
 X, X\right\rangle+\frac{1}{2}\left\langle g^{\frac{2r}{m}}
 \left[\begin{array}{cc}
 \mid B^{*}\mid&0\\
 0&\mid C^{*} \mid
 \end{array}\right]
 X, X\right\rangle\right)^{m}\\&
 -2^{-m}\left(\left\langle f^{\frac{2r}{m}}
 \left[\begin{array}{cc}
 \mid C\mid&0\\
 0&\mid B \mid
 \end{array}\right]
 X, X\right\rangle^{\frac{m}{2}}-\left\langle g^{\frac{2r}{m}}
 \left[\begin{array}{cc}
 \mid B^{*}\mid&0\\
 0&\mid C^{*} \mid
 \end{array}\right]
 X, X\right\rangle^{\frac{m}{2}} \right)^{2}\\&
 =\left(\frac{1}{2}
 \left\langle
 \left[\begin{array}{cc}
 f^{\frac{2r}{m}}\mid C\mid +g^{\frac{2r}{m}}\mid B^{*}\mid&0\\
 0&f^{\frac{2r}{m}}\mid B \mid +g^{\frac{2r}{m}}\mid C^{*}\mid
 \end{array}\right]
 X, X\right\rangle \right)^{m}\\&
 -2^{-m}\left(\left\langle f^{\frac{2r}{m}}
 \left[\begin{array}{cc}
 \mid C\mid&0\\
 0&\mid B \mid
 \end{array}\right]
 X, X\right\rangle^{\frac{m}{2}}-\left\langle g^{\frac{2r}{m}}
 \left[\begin{array}{cc}
 \mid B^{*}\mid&0\\
 0&\mid C^{*} \mid
 \end{array}\right]
 X, X\right\rangle^{\frac{m}{2}} \right)^{2}.
 \end{align*}}
 Therefore
 {\footnotesize\begin{align*}
 w^{r}(T)&
 \leq (\frac{1}{2})^{m}\max\{\parallel f^{\frac{2r}{m}} \mid C\mid+g^{\frac{2r}{m}}\mid B^{*}\mid\parallel^{m}, \parallel f^{\frac{2r}{m}} \mid B\mid+g^{\frac{2r}{m}}\mid C^{*}\mid \parallel^{m}\}-\inf_{\|X\|=1} \zeta (X).
 \end{align*}}
 Hence we get the desired inequality.
 \end{proof}
 \begin{remark}
 In inequality \eqref{11} if $m=1,$ then we get a refinement of inequality \eqref{7}.
 \end{remark}
 \section{numerical radius of the operator matrix $2\times 2$}
 In this section, we estimate numerical radius of matrix
 $\left[\begin{array}{cc}
  A&B\\
 C&D
 \end{array}\right].$
 \begin{lemma}
 Let $ T=\left[\begin{array}{cc}
  A&0\\
 0&D
 \end{array}\right] \in {\mathbb B}({\mathscr H_{1}}\oplus {\mathscr H_{2}})$. Then
 \begin{align}\label{14}
 w^{r}(T)\leq \frac{1}{2}\max\{ \| |A|^{r}+|A^{*}|^{r}\|, \| |D|^{r}+|D^{*}|^{r}\|\}
 \end{align}
 for $r\geq 1$.
 \end{lemma}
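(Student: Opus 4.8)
The plan is to reduce the block-diagonal estimate to a single-operator inequality and then peel off the two diagonal blocks using Lemma~\ref{1}(a). Since $T$ is block diagonal, Lemma~\ref{1}(a) gives $w(T)=\max\{w(A),w(D)\}$, and because $t\mapsto t^{r}$ is increasing on $[0,\infty)$ this yields $w^{r}(T)=\max\{w^{r}(A),w^{r}(D)\}$. Hence it suffices to prove, for an arbitrary $S\in{\mathbb B}({\mathscr H})$ and $r\geq1$, the bound $w^{r}(S)\leq\frac{1}{2}\big\||S|^{r}+|S^{*}|^{r}\big\|$, and then to apply it with $S=A$ and $S=D$ and take the maximum of the two resulting norms.

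To establish this single-operator bound, fix a unit vector $x$ and start from Lemma~\ref{5}(b) with $y=x$ and $\alpha=\frac{1}{2}$, which gives $|\langle Sx,x\rangle|^{2}\leq\langle|S|x,x\rangle\,\langle|S^{*}|x,x\rangle$. Raising both sides to the power $\frac{r}{2}$ (legitimate since $r\geq0$) and then invoking the arithmetic--geometric mean inequality, that is, the case $p=q=2$ of Lemma~\ref{2} applied to $a=\langle|S|x,x\rangle^{r/2}$ and $b=\langle|S^{*}|x,x\rangle^{r/2}$, I obtain
\[
|\langle Sx,x\rangle|^{r}\leq\frac{1}{2}\Big(\langle|S|x,x\rangle^{r}+\langle|S^{*}|x,x\rangle^{r}\Big).
\]
Now the hypothesis $r\geq1$ lets me apply the McCarthy inequality, Lemma~\ref{3}(a), to the positive operators $|S|$ and $|S^{*}|$, giving $\langle|S|x,x\rangle^{r}\leq\langle|S|^{r}x,x\rangle$ and likewise for $|S^{*}|$. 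Combining these produces $|\langle Sx,x\rangle|^{r}\leq\frac{1}{2}\langle(|S|^{r}+|S^{*}|^{r})x,x\rangle$.

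Taking the supremum over all unit vectors $x$, the left-hand side becomes $w^{r}(S)$, while on the right the operator $|S|^{r}+|S^{*}|^{r}$ is positive, so $\sup_{\|x\|=1}\langle(|S|^{r}+|S^{*}|^{r})x,x\rangle$ equals its operator norm; this is precisely $w^{r}(S)\leq\frac{1}{2}\big\||S|^{r}+|S^{*}|^{r}\big\|$. Applying this with $S=A$ and $S=D$ and recalling $w^{r}(T)=\max\{w^{r}(A),w^{r}(D)\}$ yields \eqref{14}.

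The only step that genuinely consumes the hypotheses is the passage $\langle|S|x,x\rangle^{r}\leq\langle|S|^{r}x,x\rangle$, valid exactly because $r\geq1$; for $0<r<1$ the McCarthy inequality reverses and the argument collapses, so this is where I expect the main care to be needed. The choice $\alpha=\frac{1}{2}$ in Lemma~\ref{5}(b) and the arithmetic--geometric mean step are routine. As an alternative one may bypass Lemma~\ref{1}(a) and work directly with $|T|=\mathrm{diag}(|A|,|D|)$ and $|T^{*}|=\mathrm{diag}(|A^{*}|,|D^{*}|)$, so that $|T|^{r}+|T^{*}|^{r}$ is block diagonal and its norm is the maximum of the norms of its two blocks, leading to the same conclusion.
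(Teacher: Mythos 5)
Your proof is correct, and every step checks out: Lemma~\ref{1}(a) does give $w(T)=\max\{w(A),w(D)\}$ for block-diagonal $T$, and your chain (Lemma~\ref{5}(b) with $\alpha=\tfrac12$, then AM--GM applied to the $\tfrac r2$-th powers, then Lemma~\ref{3}(a), then the supremum together with positivity of $|S|^{r}+|S^{*}|^{r}$) is a valid proof of the single-operator bound $w^{r}(S)\leq\tfrac12\,\||S|^{r}+|S^{*}|^{r}\|$. The difference from the paper is organizational rather than substantive. The paper never splits $T$ at the outset: it runs the same ingredients directly on block vectors $X\in\mathscr H_{1}\oplus\mathscr H_{2}$, using that $|T|=\mathrm{diag}(|A|,|D|)$ and $|T^{*}|=\mathrm{diag}(|A^{*}|,|D^{*}|)$, in the order mixed Schwarz $\rightarrow$ AM--GM $\rightarrow$ the power-mean inequality $\tfrac12(a+b)\leq\bigl(\tfrac12(a^{r}+b^{r})\bigr)^{1/r}$ (the second inequality of Lemma~\ref{2}) $\rightarrow$ McCarthy, and only invokes Lemma~\ref{1}(a) at the very end to turn the numerical radius of the resulting positive block-diagonal operator into a maximum of norms. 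Your version, by contrast, applies Lemma~\ref{1}(a) first and thereby isolates the single-operator inequality $w^{r}(S)\leq\tfrac12\||S|^{r}+|S^{*}|^{r}\|$ (a known El-Haddad--Kittaneh-type power inequality, of which the paper's inequality \eqref{13} is the case $r=1$); you also need only the plain AM--GM part of Lemma~\ref{2}, since raising to the power $\tfrac r2$ first absorbs the power-mean step. What each buys: your modular route is cleaner, reusable, and makes transparent that the lemma is nothing more than the scalar inequality applied blockwise; the paper's inline route matches the template of the other proofs in its section (all of which manipulate block vectors $X$ directly), at the cost of carrying the $2\times2$ structure through every line.
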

 \begin{proof}
 Let  $X=\left[\begin{array}{cc}
 x_1\\
 x_2
 \end{array}\right]
 \in {\mathscr H_1\oplus\mathscr H_2}$ be any unit vector. Then
 \begin{align*}
 |\langle TX, X\rangle|&
 \leq \langle |T|X, X\rangle^{\frac{1}{2}}\langle |T^{*}|X, X\rangle^{\frac{1}{2}}\\&
 \leq \frac{1}{2}\left\langle \left[\begin{array}{cc}
 \mid A\mid&0\\
 0&\mid D \mid
 \end{array}\right]X, X \right\rangle+\frac{1}{2}\left\langle \left[\begin{array}{cc}
 \mid A^{*}\mid&0\\
 0&\mid D^{*} \mid
 \end{array}\right]X, X \right\rangle\\&
 \leq \left(\frac{1}{2}\left\langle \left[\begin{array}{cc}
 \mid A\mid&0\\
 0&\mid D \mid
 \end{array}\right]X, X \right\rangle^{r}+\frac{1}{2}\left\langle \left[\begin{array}{cc}
 \mid A^{*}\mid&0\\
 0&\mid D^{*} \mid
 \end{array}\right]X, X \right\rangle^{r}\right)^{\frac{1}{r}}\\&
 \leq \left(\frac{1}{2}\left\langle \left[\begin{array}{cc}
 \mid A\mid^{r}&0\\
 0&\mid D \mid^{r}
 \end{array}\right]X, X \right\rangle+\frac{1}{2}\left\langle \left[\begin{array}{cc}
 \mid A^{*}\mid^{r}&0\\
 0&\mid D^{*} \mid^{r}
 \end{array}\right]X, X \right\rangle\right)^{\frac{1}{r}}\\&
 =\left(
 \left\langle \left[\begin{array}{cc}
 \frac{1}{2}(\mid A\mid^{r}+|A^{*}|^{r})&0\\
 0&\frac{1}{2}(\mid D \mid^{r}+|D^{*}|^{r})
 \end{array}\right]X, X \right\rangle\right)^{\frac{1}{r}},
 \end{align*}
 and so \begin{align*}
 |\langle TX, X\rangle |^{r}\leq \left\langle \left[\begin{array}{cc}
 \frac{1}{2}(\mid A\mid^{r}+|A^{*}|^{r})&0\\
 0&\frac{1}{2}(\mid D \mid^{r}+|D^{*}|^{r})
 \end{array}\right]X, X \right\rangle.
 \end{align*}
 Therefore
 \begin{align*}
 w^{r}(T)\leq \frac{1}{2}\max\{ \| |A|^{r}+|A^{*}|^{r}\|, \| |D|^{r}+|D^{*}|^{r}\|\}.
 \end{align*}
 \end{proof}
 \begin{remark}
 By letting $r=1$ and $A=D$ in inequality \eqref{14}, we obtain inequality \eqref{13}, that is
 \begin{align*}
 w(A)\leq \frac{1}{2}\| |A|+|A^{*}| \|.
 \end{align*}
 \end{remark}
 The following proposition follows from inequalities \eqref{7} and \eqref{14}.
 \begin{proposition}
 Let  $T=\left[\begin{array}{cc}
  A&B\\
 C&D
 \end{array}\right]$ with $A, B, C, D\in {\mathbb B}({\mathscr H})$. Then
 \begin{align*}
 w(T)\leq \frac{1}{2}\max\{ \| |C| +| B^{*}|\|, \| | B| + |C^{*}|\|\}+\frac{1}{2}\max\{ \||A| +|A^{*}|\|, \| |D|+ |D^{*}|\| \}.
 \end{align*}
 In particular,
 \begin{align*}
  w\left(\left[\begin{array}{cc}
  A&B\\
 B&A
 \end{array}\right]\right)
 \leq \frac{1}{2}( \| |A|+ |A^{*}|\|+\| |B|+|B^{*}|\|).\\
 \end{align*}
 \end{proposition}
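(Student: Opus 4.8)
The plan is to exploit the fact that the numerical radius $w(\cdot)$ is a norm, hence subadditive, and to split $T$ into its diagonal and off-diagonal parts. Write
\begin{align*}
T=\left[\begin{array}{cc} A&B\\ C&D\end{array}\right]
=\left[\begin{array}{cc} A&0\\ 0&D\end{array}\right]
+\left[\begin{array}{cc} 0&B\\ C&0\end{array}\right],
\end{align*}
so that the triangle inequality for the norm $w(\cdot)$ immediately gives
\begin{align*}
w(T)\leq w\!\left(\left[\begin{array}{cc} A&0\\ 0&D\end{array}\right]\right)
+w\!\left(\left[\begin{array}{cc} 0&B\\ C&0\end{array}\right]\right).
\end{align*}
It then suffices to bound each summand using results already established in the excerpt.

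For the diagonal term I would invoke inequality \eqref{14} with $r=1$, which yields
\begin{align*}
w\!\left(\left[\begin{array}{cc} A&0\\ 0&D\end{array}\right]\right)
\leq \frac{1}{2}\max\{\,\||A|+|A^{*}|\|,\ \||D|+|D^{*}|\|\,\}.
\end{align*}
For the off-diagonal term I would specialize inequality \eqref{7} by choosing $f(t)=g(t)=t^{1/2}$, $r=1$, and $p=q=2$, so that $f^{pr}(t)=g^{qr}(t)=t$ and the two weights $\tfrac1p,\tfrac1q$ both equal $\tfrac12$. This gives
\begin{align*}
w\!\left(\left[\begin{array}{cc} 0&B\\ C&0\end{array}\right]\right)
\leq \frac{1}{2}\max\{\,\||C|+|B^{*}|\|,\ \||B|+|C^{*}|\|\,\}.
\end{align*}
Adding the two bounds produces exactly the stated inequality.

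For the particular case I would substitute $C=B$ and $D=A$ into the general bound. The off-diagonal maximum then collapses to $\tfrac12\||B|+|B^{*}|\|$ since both arguments coincide, and likewise the diagonal maximum collapses to $\tfrac12\||A|+|A^{*}|\|$, giving
\begin{align*}
w\!\left(\left[\begin{array}{cc} A&B\\ B&A\end{array}\right]\right)
\leq \frac{1}{2}\bigl(\||A|+|A^{*}|\|+\||B|+|B^{*}|\|\bigr).
\end{align*}
This proof is essentially a bookkeeping exercise once the decomposition is in place; the only point demanding genuine care is the correct specialization of \eqref{7}, namely verifying that the parameter constraints ($r\geq 1$, $p\geq q>1$, $pr\geq 2$) are met by the choice $r=1$, $p=q=2$ and that the functional relation $f(t)g(t)=t$ holds for $f(t)=g(t)=t^{1/2}$. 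Everything else follows from subadditivity of the numerical radius norm, so I expect no substantive obstacle beyond ensuring the factors of $\tfrac12$ are tracked consistently through both invoked inequalities.
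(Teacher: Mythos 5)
Your proof is correct and follows exactly the route the paper intends: the paper's proof consists solely of the remark that the proposition ``follows from inequalities \eqref{7} and \eqref{14},'' and your decomposition of $T$ into diagonal plus off-diagonal parts, subadditivity of $w(\cdot)$, and the specializations $r=1$ in \eqref{14} and $f(t)=g(t)=t^{1/2}$, $p=q=2$, $r=1$ in \eqref{7} are precisely the details being left to the reader. Your verification of the parameter constraints ($pr=2\geq 2$, etc.) is a welcome addition the paper omits.
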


  \begin{theorem}
 Let  $ T=\left[\begin{array}{cc}
  0&B\\
 C&0
 \end{array}\right] \in {\mathbb B}({\mathscr H_{2}}\oplus {\mathscr H_{1}})$  and $r\geq 2$. Then
 \begin{align}
{1\over 2^{{3\over2}(r-1)}}\max\{ \| \mu \|, \| \eta \| \}
 \leq w^{r}(T)\leq (\frac{1}{2})^{r+1}
 \max\{ \| \mu \|, \| \eta \| \},
 \end{align}
 where
\begin{align*}
  \mu=|(C-B^{*})+i(C+B^{*})|^{r}+|(B^{*}-C)+i(C+B^{*})|^{r},
\end{align*}
   and
 \begin{align*}
  \eta=|(B-C^{*})+i(B+C^{*})|^{r}+|(C^{*}-B)+i(B+C^{*})|^{r}.
 \end{align*}
 \end{theorem}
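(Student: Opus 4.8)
The plan is to pass to the Cartesian decomposition $T=P+iQ$, with $P=\tfrac12(T+T^{*})$ and $Q=\tfrac{1}{2i}(T-T^{*})$ self-adjoint, and to recognise $\mu$ and $\eta$ as the diagonal blocks of $|P+Q|^{r}+|P-Q|^{r}$. Because $T$ is off-diagonal, $P\pm Q$ are again self-adjoint and off-diagonal, so each is determined by a single $(1,2)$-block $Z$ (with $(2,1)$-block $Z^{*}$), and these blocks factor through $1\pm i$: the $(1,2)$-block of $P+Q$ is $\tfrac{1-i}{2}(B+iC^{*})$ and that of $P-Q$ is $\tfrac{1+i}{2}(B-iC^{*})$. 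Using $|zW|=|z|\,|W|$ for a scalar $z$, the fact that the $r$-th power of the modulus of such a matrix is $\mathrm{diag}(|Z^{*}|^{r},|Z|^{r})$, and the identities $B^{*}-iC=-i(C+iB^{*})$ and $B^{*}+iC=i(C-iB^{*})$, a short computation gives
\[ |P+Q|^{r}+|P-Q|^{r}=2^{-r}\left[\begin{array}{cc} \mu & 0\\ 0 & \eta\end{array}\right]. \]
Taking norms and invoking Lemma \ref{1}(a), the assertion becomes equivalent to
\[ 2^{(3-r)/2}\bigl\||P+Q|^{r}+|P-Q|^{r}\bigr\|\le w^{r}(T)\le\tfrac12\bigl\||P+Q|^{r}+|P-Q|^{r}\bigr\|. \]

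For the upper bound I would estimate the quadratic form directly. Fix a unit vector $X$ and set $a=\langle PX,X\rangle$ and $b=\langle QX,X\rangle$, both real, so that $|\langle TX,X\rangle|^{r}=(a^{2}+b^{2})^{r/2}$. From $a^{2}+b^{2}=\tfrac12\bigl[(a+b)^{2}+(a-b)^{2}\bigr]$ and the convexity of $t\mapsto t^{r/2}$ (this is exactly where $r\ge2$ enters) one gets $(a^{2}+b^{2})^{r/2}\le\tfrac12\bigl(|a+b|^{r}+|a-b|^{r}\bigr)$. Next, Lemma \ref{4} bounds $|a+b|=|\langle(P+Q)X,X\rangle|$ by $\langle|P+Q|X,X\rangle$, and Lemma \ref{3}(a) upgrades this to $|a+b|^{r}\le\langle|P+Q|^{r}X,X\rangle$, and likewise for $a-b$. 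Adding the two estimates and taking the supremum over $X$ yields $w^{r}(T)\le\tfrac12\bigl\||P+Q|^{r}+|P-Q|^{r}\bigr\|$, which is precisely the claimed upper bound $\frac{1}{2^{r+1}}\max\{\|\mu\|,\|\eta\|\}$.

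For the lower bound I would use the rotation formula of Lemma \ref{1}(c), or equivalently the identity $w(T)=\sup_{\theta}\|\mathrm{Re}(e^{i\theta}T)\|$. Since $\mathrm{Re}(e^{i\theta}T)=\cos\theta\,P-\sin\theta\,Q$, evaluating at $\theta=\pm\pi/4$ gives $\|P\pm Q\|\le\sqrt2\,w(T)$. Hence, by the triangle inequality and positivity of the two moduli,
\[ \bigl\||P+Q|^{r}+|P-Q|^{r}\bigr\|\le\|P+Q\|^{r}+\|P-Q\|^{r}\le2\cdot2^{r/2}\,w^{r}(T), \]
which produces a lower bound for $w^{r}(T)$ of the asserted shape in terms of $\|\mu\|$ and $\|\eta\|$.

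The main obstacle is the precise constant in the lower bound. The displayed triangle-inequality estimate only delivers $w^{r}(T)\ge2^{-(r+2)/2}\bigl\||P+Q|^{r}+|P-Q|^{r}\bigr\|$, whereas the statement demands the strictly larger coefficient $2^{(3-r)/2}$ (note $2^{(3-r)/2}>2^{-(r+2)/2}$). Thus the crude step of splitting the sum of the two positive blocks and bounding each by $(\sqrt2\,w(T))^{r}$ is too lossy, and the delicate part of the argument is to control how $|P+Q|^{r}$ and $|P-Q|^{r}$ combine so as to match the exponent $\tfrac32(r-1)$; this reconciliation of constants is where the real work — and the sharpest care with the powers of $2$ carried by $\mu$ and $\eta$ — must be concentrated.
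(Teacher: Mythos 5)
You have, in substance, reproduced the paper's own argument. The paper also works with the Cartesian decomposition $T=S+iW$, establishes the same block identity $|S+W|^{r}+|S-W|^{r}=2^{-r}\,\mathrm{diag}(\mu,\eta)$, proves the upper bound by exactly your steps (the scalar identity $a^{2}+b^{2}=\tfrac{1}{2}[(a+b)^{2}+(a-b)^{2}]$, convexity of $t^{r/2}$ for $r\geq 2$, Lemma \ref{4}, Lemma \ref{3}(a), then Lemma \ref{1}(a)), and starts its lower bound from $w^{2}(T)\geq\tfrac{1}{2}\|(S\pm W)^{2}\|$, i.e. $\|S\pm W\|\leq\sqrt{2}\,w(T)$, which is precisely your rotation estimate at $\theta=\pm\pi/4$ (the paper cites it as \cite[Theorem 1]{kit2}). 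So your upper bound is complete and identical to the paper's, with the correct constant $2^{-(r+1)}$.

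The obstacle you honestly flag in the lower bound is genuine, but it is not a defect of your argument relative to the paper's: the paper has exactly the same gap and papers over it. Its displayed chain is
\begin{align*}
2w^{r}(T)&\geq 2^{-\frac{r}{2}}\left(\| |S+W|^{r}\|+\| |S-W|^{r}\|\right)\geq 2^{-\frac{r}{2}}\| |S+W|^{r}+|S-W|^{r}\|\\
&\geq 2^{-\frac{r}{2}-1}\left|\left\langle\left(|S+W|^{r}+|S-W|^{r}\right)X,X\right\rangle\right|,
\end{align*}
which, after taking the supremum over unit vectors, yields only $w^{r}(T)\geq 2^{-\frac{3r}{2}-2}\max\{\|\mu\|,\|\eta\|\}$; your bound $w^{r}(T)\geq 2^{-\frac{3r}{2}-1}\max\{\|\mu\|,\|\eta\|\}$ is actually one factor of $2$ stronger, since the paper wastes an extra $\tfrac12$ when passing from the operator norm of a positive operator to its quadratic form. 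Both fall short of the asserted $2^{-\frac{3}{2}(r-1)}$ by factors $2^{7/2}$ and $2^{5/2}$ respectively, and the paper closes that distance by assertion, not by proof. Moreover, no refinement can close it, because the stated lower bound is false: take $B=C=I$ and any $2\leq r<5$. Then $w^{r}(T)=1$, while $\mu=\eta=2^{r+1}I$, so the claimed inequality becomes $2^{\frac{5}{2}-\frac{r}{2}}\leq 1$, which fails. Thus the ``delicate reconciliation of constants'' you set aside is not a missing step in your proof but an error in the theorem itself; what your argument (and, up to a factor of $2$, the paper's) actually establishes is the inequality with $2^{-\frac{3r}{2}-1}$ in place of $2^{-\frac{3}{2}(r-1)}$.
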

 \begin{proof}
 Let  $X=\left[\begin{array}{cc}
  x_{1}\\
 x_{2}
 \end{array}\right] \in {\mathscr H_{1}}\oplus {\mathscr H_{2}}$ be a unit vector. Let  $T=S+iW$ be the Cartesian decomposition of $T$. Then applying \cite[Theorem 1]{kit2}, we have
 \begin{align*}
 w^{2}(T)\geq \frac{1}{2} \|(S\pm W)^{2}\|.
 \end{align*}
 Therefore
 \begin{align*}
 w^{r}(T)\geq 2^{-{\frac{r}{2}}}\| (S\pm W)^{2}\|^{\frac{r}{2}}=2^{-\frac{r}{2}}\|  |S\pm W |^{r}\|,
 \end{align*}
 and so
 \begin{align*}
 2w^{r}(T)&
 \geq 2^{-\frac{r}{2}}(\| S+W |^{r}\|+\| |S-W|^{r}\|)\\&
 \geq 2^{-\frac{r}{2}}\| |S+W|^{r} +| S-W|^{r} \|\\&
 \geq 2^{-\frac{r}{2}-1}|\langle (|S+W|^{r}+|S-W|^{r})X, X\rangle |\\&
 =2^{-\frac{r}{2}-1}\left| \left\langle
 \left[\begin{array}{cc}
  (\frac{1}{2})^{r}\mu&0\\
 0&(\frac{1}{2})^{r}\eta
 \end{array}\right]
 X, X\right\rangle \right|,
 \end{align*}
 where
 \begin{align*}
 \mu=| (C-B^{*})+i(C+B^{*})|^{r}+|(B^{*}-C)+i(C+B^{*})|^{r},
 \end{align*}
 and
 \begin{align*}
 \eta= |(B-C^{*})+i(B+C^{*})|^{r}+|(C^{*}-B)+i(B+C^{*})|^{r}.
 \end{align*}
 Taking the supremum over $X\in {\mathbb B}({\mathscr H_{1}}\oplus {\mathscr H_{2}})$ with $\| X\|=1$ in the above inequality and applying the numerical radius of diagonal matrices, we deduce the first inequality.\\
 For the second inequality, we have
 \begin{align*}
 |\langle TX, X\rangle|^{r}&=(\langle SX, X\rangle^{2}+\langle WX, X\rangle^{2})^{\frac{r}{2}}\\
 &=2^{-\frac{r}{2}}(\langle (S+W)X, X\rangle^{2}+\langle (S-W)X, X\rangle^{2})^{\frac{r}{2}}\\
 &\leq 2^{-\frac{r}{2}}2^{\frac{r}{2}-1}(|\langle (S+W)X, X\rangle|^{r}+|\langle (S-W)X, X\rangle|^{r})\\&
 \qquad \qquad \qquad (\textrm {since}\,  f(t)=t^{\frac{r}{2}}\textrm { is convex)}\\
 &\leq \frac{1}{2}(\langle | S+W |X, X\rangle^{r}+\langle | S-W |X, X\rangle^{r})\\&
 \leq  \frac{1}{2}(\langle | S+W |^{r}X, X\rangle+\langle | S-W |^{r}X, X\rangle)\\&
 =\frac{1}{2}\langle (| S+W |^{r}+| S-W |^{r})X, X\rangle\\&
 =\frac{1}{2}\left\langle
 \left[\begin{array}{cc}
  (\frac{1}{2})^{r}\mu&0\\
 0&(\frac{1}{2})^{r}\eta
 \end{array}\right]
 X, X\right\rangle.
 \end{align*}
 Now, applying the definition of numerical radius and Lemma \ref{1}, we get the desired inequality.
 \end{proof}
 \begin{remark}
 If $T^{2}=0$, then $w(T)=\frac{1}{2}\| T \|$, $\| T^{*}T+TT^{*} \|=\| T \|^{2},$ and
 $$\| |S+W|^{r}+| S-W|^{r} \|=2^{-\frac{r}{2}+1}\| T^{*}T+TT^{*} \|^{\frac{r}{2}}=2^{-\frac{r}{2}+1}\| T \|^{r}.$$
 On the other hand, from $\| |S+W|^{r}+| S-W|^{r} \|=\sup_{\|X\|=1}| \langle |S+W|^{r}+| S-W|^{r}X, X \rangle |$,
 we conclude that $(\frac{1}{2})^{r}\max\{ \| \mu\|, \|\eta\|\}=2^{-\frac{r}{2}+1}\| T \|^{r}$.
 Therefore $2^{\frac{-3}{2}r-1}\max\{ \| \mu \|, \| \eta \| \}=2^{-r}\| T \|^{r}=w^{r}(T)$, where $\mu$ and $\eta$ are defined above.\\
 \\
 \end{remark}

\bigskip
\bibliographystyle{amsplain}

\begin{thebibliography}{99}

\bibitem{aA} A. Abu-Omar and F. Kittaneh, \textit{Estimates for the numerical radius and the spectral radius of the Frobenius companion matrix and bounds for the zeros of polynomials}, Ann. Func. Anal. \textbf{5} (2014), no. 1, 56--62.

\bibitem{CAL} A. Abu-Omar and F. Kittaneh, \textit{Numerical radius inequalities for $n\times n$ operator matrices}, Linear Algebra Appl. \textbf{468} (2015), 18--26.

\bibitem{FUJ} Y. Al-manasrah and F. Kittaneh, \textit{A generalization of two refined Young inequalities}, Positivity  \textbf{19} (2015), no. 4, 757--768.

\bibitem{ando}P.R. Halmos, \textit{A Hilbert Space Problem Book}, 2nd ed., springer, New York, 1982.

\bibitem{ABM} G.H. Hardy, J.E. Littlewood and G. Polya,  \textit{inequalities}, 2nd ed., Cambridge Univ. Press, Cambridge,  1988.

\bibitem{ROD} O. Hirzallah, F. Kittaneh and K. Shebrawi, \textit{Numerical radius inequalities for certain 2$\times$2 operator matrices}, Integral equations Operator Theory \textbf{71} (2011), 129-149.

\bibitem{KIT} F. Kittaneh, \textit{Notes on some inequalitis for Hilbert space operators}, Publ. Res. Inst. Math. Sci. \textbf{24} (2) (1988), 283--293.

\bibitem{naj} F. Kittaneh, \textit{A numerical radius inequality and an estimate for the numerical radius of the Frobenius companion matrix},
Studia Math. \textbf{158} (2003), 11--17.

\bibitem{kit2} F. Kittaneh, \textit{Numerical radius inequalities for Hilbert space operators},
Studia Math. \textbf{168} (2005), no. 1, 73--80.

\bibitem{MOS} F. Kittaneh, M.S. Moslehian and T. Yamazaki, \textit{Cartesian decomposition and numerical radius inequalities}, Linear Algebra Appl. \textbf{471} (2015), 46--53.

\bibitem{gof}K.E. Gustafson and D.K.M. Rao, \textit{Numerical Range, The Field of Values of Linear Operators and
Matrices}, Springer, New York, 1997.


\bibitem{eli} M. Sattari, M.S. Moslehian and T. Yamazaki, \textit{Some genaralized numerical radius inequalities for Hilbert space operators}, Linear Algebra Appl, \textbf{470} (2014), 1--12.

\bibitem{FUJ2}  M. Sattari, M.S. Moslehian and K. Shebrawi, \textit{Extension of Euclidean operator radius inequalities},  Math. Scand. (2016) in press, arXiv 1502.00083.

\bibitem{sheikh} A. Sheikhhosseini, M.S. Moslehian and K. Shebrawi, \textit{Inequalities for generalized Euclidean operator radius via Young's inequality}, J. Math. Anal. Appl. \textbf{445} (2017), no. 2, 1516--1529.


\bibitem{YAM} T.Yamazaki, \textit{On upper and lower bounds of the numerical radius and an equality condition,} Studia Math. \textbf{178} (2007), 83--89.

\end{thebibliography}

\end{document}